\documentclass[12pt]{amsart}
\usepackage{graphicx}
\usepackage[T1]{fontenc}
\usepackage{amssymb,amsmath,amsfonts, amsthm}
\usepackage{tikz}
\usetikzlibrary[calc,patterns] 

\newcommand{\R}{\mathbb{R}}

\newcommand{\SL}{\mathrm{SL}}

\newtheorem{theorem}{Theorem}[section]
\newtheorem{proposition}[theorem]{Proposition}

\newtheorem{lemma}[theorem]{Lemma}

\newtheorem{example}[theorem]{Example}

\newtheorem*{thmNN}{Theorem}

\theoremstyle{definition}

\theoremstyle{remark}
\newtheorem{rem}[theorem]{Remark}

\title{Systoles in translation surfaces}
\author{Corentin Boissy, Slavyana Geninska}

\address{Institut de Math\'ematiques de Toulouse \\ 
Universit\'e Toulouse 3 \\
118 route de Narbonne \\
31062 Toulouse, France
}
\email{geninska@math.univ-toulouse.fr}
\email{corentin.boissy@math.univ-toulouse.fr}

\begin{document}
\begin{abstract}
For a translation surface, we define the relative systole to be the length of the shortest saddle connection. We give a characterization of the maxima of the systole function on a stratum, and give a family of examples providing local but nonglobal maxima on each stratum of genus at least three.  We further study the relation between (locally) maximal values of the systole function and the number of shortest saddle connections.
\end{abstract}

\maketitle
\section{Introduction}
This paper deals with flat metrics defined by Abelian differentials on compact Riemann surfaces (\emph{translation surfaces}). Such flat metrics have conical singularities of angle $(k+1)2\pi$, where $k$ is the order of the zero of the corresponding Abelian differential. 
A stratum of the moduli space of Abelian differential corresponds to translation surfaces that share the same combinatorics of zeroes, possibly including marked points.

A saddle connection on a translation surface is a geodesic joining two singularities (possibly the same) and with no singularity in its interior. 
A sequence of area one translation surfaces in a stratum leaves any compact set if and only if the length of the shortest saddle connection tends to zero.
The set of translation surfaces with short saddle connections and compactification issues of strata are related to dynamics and counting problems on translation surfaces and have been widely studied in the last 30 years (see for instance \cite{KMS,EMZ,EKZ}).

In this paper, we are interested in the opposite problem: we study surfaces that are as far as possible from the boundary and that would represent the ``core'' of a stratum. For a translation surface, we define the  \textit{relative systole} $\mathrm{Sys}(S)$ to be the length of the shortest saddle connection of $S$. Our primary goal is to study global and local maxima of the function $\mathrm{Sys}$ when restricted to area one translation surfaces. Note that our definition is different from the ``true systole''   \emph{i.e.} shortest closed curve that has been studied by Judge and Parlier  in \cite{JP}. In the rest of the paper, for simplicity, if not mentioned otherwise, the term ``systole'' will mean ``relative systole''.

This kind of question appears also in other contexts. Maxima of the systole function for moduli spaces of hyperbolic surfaces, where the systole is the length of the shortest closed geodesic, has been studied by various authors, for instance Bavard \cite{Ba}, Schmutz Schaller \cite{pS94}, or more recently Balacheff, Makover and  Parlier \cite{BMP}. 
A related question is the maximal number of geodesics realizing the systole, the so called kissing number, see for instance Schmutz Schaller \cite{pSS97}, Fanoni and Parlier \cite{FaPa}.

In the context of area one translation surfaces, while the characterization of global maxima for $\mathrm{Sys}$ seems to have been known for some time in the mathematic community, the existence of local maxima was unknown. We provide explicit examples of local maxima that are not global in each stratum with genus $g=2$ with marked points or $g\geq 3$. We also study the relation between the (locally) maximal values of the function $\mathrm{Sys}$ and the (locally) maximal number of shortest saddle connections.

The paper is organized as follows. In Section~2, we give some general background on translation surfaces.

In Section~3, we study global maxima of the function $\mathrm{Sys}$ for area one translation surfaces. We prove the following theorem (see Theorem~\ref{th:max:sys}):

\begin{thmNN}
Let $S$ be a genus $g\geq 1$ translation surface of area one and $r>0$ singularities or marked points. Then 
$$\mathrm{Sys}(S) \leq \left(\frac{\sqrt{3}}{2}(2g-2+r)\right)^{-\frac{1}{2}}.$$
The equality is obtained if and only if $S$ is built with equilateral triangles with sides saddle connections of length $\mathrm{Sys}(S)$. Such surface exists in any connected component of any stratum.
\end{thmNN}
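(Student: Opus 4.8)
The plan is to pass to a triangulation of $S$ by saddle connections and to estimate the area of each triangle from below. First I would note that since $\sum_i k_i = 2g-2$, any triangulation of $S$ by saddle connections in which every singularity is a vertex has exactly $N := 2(2g-2+r)$ triangles: writing $V,E,F$ for the numbers of vertices, edges and faces, one has $V = r$, $3F = 2E$ and $V - E + F = 2-2g$, whence $F = 2(2g-2+r)$. Among all such triangulations I would use the Delaunay one, whose characteristic property is that the open disk circumscribed about each triangle, developed into $S$, is immersed and contains no singularity.

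Set $\ell := \mathrm{Sys}(S)$. The core of the argument is the inequality $\mathrm{Area}(T) \ge \tfrac{\sqrt3}{4}\,\ell^2$ for every Delaunay triangle $T$, with equality exactly when $T$ is equilateral of side $\ell$; granting this, summing over the $N$ triangles gives $1 = \mathrm{Area}(S) \ge N\cdot\tfrac{\sqrt3}{4}\ell^2 = \tfrac{\sqrt3}{2}(2g-2+r)\,\ell^2$, which rearranges to the stated bound. Each of the three sides of $T$ is a saddle connection, hence of length $\ge \ell$; if $a\le b\le c$ are the side lengths and $\gamma$ the angle opposite $c$, then $\mathrm{Area}(T) = \tfrac12 ab\sin\gamma \ge \tfrac12\ell^2\sin\gamma$, and since $\gamma$ is the largest angle it satisfies $\gamma \ge \pi/3$ automatically, so whenever $\gamma\le 2\pi/3$ we get $\sin\gamma\ge\tfrac{\sqrt3}{2}$ and the bound follows, equality forcing $a=b=\ell$ and $\gamma\in\{\pi/3,2\pi/3\}$. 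The remaining, genuinely obtuse case $\gamma > 2\pi/3$ — where $T$ is a thin, nearly degenerate triangle and $c>\sqrt3\,\ell$ — is where the Delaunay condition must be used, and I expect this to be the main obstacle: a naive per-triangle estimate is not enough, and one has to combine the empty-circumdisk property of the Delaunay triangulation with the fact that two such thin triangles glued along their long common edge produce, as the short diagonal of the resulting embedded quadrilateral, a saddle connection shorter than $\ell$, a contradiction. The precise bookkeeping that turns this observation into $\mathrm{Area}(T)\ge\tfrac{\sqrt3}{4}\ell^2$ (or into the equivalent global estimate $\sum_T\mathrm{Area}(T)\ge N\tfrac{\sqrt3}{4}\ell^2$) is the technical heart of the proof.

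For the equality statement, equality in $1\ge N\tfrac{\sqrt3}{4}\ell^2$ forces equality in every Delaunay triangle, so each is either equilateral of side $\ell$ or isosceles with two sides $\ell$ and apex angle $2\pi/3$; since two triangles of the latter type sharing their long edge of length $\sqrt3\,\ell$ form a rhombus of side $\ell$ which is also split by its short diagonal into two equilateral triangles of side $\ell$, after this re-triangulation $S$ is in all cases obtained by gluing $N$ equilateral triangles of side $\ell$ along their sides. Conversely any such surface has area $N\tfrac{\sqrt3}{4}\ell^2$, which equals $1$ precisely for $\ell = \big(\tfrac{\sqrt3}{2}(2g-2+r)\big)^{-1/2}$, and one checks directly that its systole equals $\ell$ (every edge is a saddle connection of length $\ell$, and no saddle connection is shorter).

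Finally, to see that such a surface exists in every connected component of every stratum, I would produce, for each stratum, an explicit gluing of $N = 2(2g-2+r)$ equilateral triangles realizing the prescribed singularity data, and then use the Kontsevich--Zorich classification of connected components to place (a variant of) the construction in each component: for the hyperelliptic components one takes an explicitly symmetric triangulated model, and for the remaining components one distinguishes even and odd spin structure by a direct computation of the Arf invariant on the chosen equilateral-triangle surface. The only real work here is this combinatorial verification; it contains no further analytic difficulty.
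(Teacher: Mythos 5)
Your reduction to the per-triangle inequality $\mathrm{Area}(T)\ge\frac{\sqrt3}{4}\ell^2$ for every Delaunay triangle is where the proposal breaks down, and not only because you leave the obtuse case $\gamma>2\pi/3$ unfinished: that per-triangle bound is not available. A triangle with all sides of length $\ge\ell$ can have arbitrarily small area (sides $\ell,\ell,c$ with $c$ close to $2\ell$ give area $\frac{c}{4}\sqrt{4\ell^2-c^2}\to 0$), and the empty-circumdisk property combined with your ``short diagonal of the adjacent quadrilateral'' observation only excludes the case where the neighbour across the long edge is also thin; it does not force every Delaunay triangle to have area $\ge\frac{\sqrt3}{4}\ell^2$. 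Indeed the paper explicitly remarks that some triangles in the Delaunay triangulation might have small area. So a correct argument must redistribute area between neighbouring triangles rather than bound each one separately; you correctly identify this as ``the technical heart'' but supply no mechanism for it, which is a genuine gap rather than a routine verification.

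The paper's mechanism is to bound a different per-triangle quantity. Decomposing $\mathrm{Area}(S)$ as a sum, over pairs consisting of a singularity and a side of its Voronoi cell, of triangles whose height is half a saddle connection (hence $\ge\ell/2$), one gets $\mathrm{Area}(S)\ge\frac{\ell}{2}\cdot L$ where $L$ is the total length of the Voronoi $1$-cells. Since each Voronoi edge joins the circumcenters of the two Delaunay triangles adjacent to the dual edge, $L=\sum_T\sigma(T)$, where $\sigma(T)$ is the sum of \emph{signed} distances from the circumcenter of $T$ to its three sides, and Carnot's theorem gives $\sigma(T)=R+r$. The inequality $R+r\ge\frac{\sqrt3}{2}\ell$ \emph{does} hold for every triangle with all sides $\ge\ell$, including thin obtuse ones (their large circumradius compensates their small inradius), with equality exactly for the equilateral triangle of side $\ell$; it is proved by reducing to isosceles triangles and a one-variable computation. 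This is the idea missing from your proposal, and without it neither the inequality nor your equality analysis (which presupposes the per-triangle classification) goes through. Your existence argument is also only a sketch: the paper avoids any Arf-invariant computation by taking a one-cylinder square-tiled representative in each connected component (Kontsevich--Zorich) and shearing it until each square becomes two equilateral triangles, which stays within the component.
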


This result was independently proven recently by Judge and Parlier  \cite{JP} for surfaces with one singularity: the authors are interested in shortest closed curves but their proof should work in any strata in our context. 

In Section~4, we study local maxima of the function $\mathrm{Sys}$ that are not global. With the help of explicit examples we prove the following result which is Theorem~\ref{th:loc:max:in:each:stratum} in the text.
\begin{thmNN}
Each stratum of area one surfaces with genus $g=2$ with marked points or $g\geq 3$ contains local maxima of the function $\mathrm{Sys}$ that are not global.
\end{thmNN}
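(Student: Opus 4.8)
The plan is to exhibit an explicit family of area one surfaces, one in each connected component of each stratum $\mathcal H(k_1,\dots,k_r)$ with all $k_i\ge 1$ and $g\ge 3$, and to certify local maximality by an infinitesimal eutaxy-type criterion. First I would record this criterion. Fix an area one surface $S$ with finite set $\Gamma$ of shortest saddle connections. In period coordinates the holonomy of a saddle connection $\gamma$ is a linear coordinate functional $z_\gamma$, so $\ell_\gamma=|z_\gamma|$ is smooth near $S$ (since $z_\gamma\ne 0$), and no saddle connection that is not already short on $S$ becomes short under a small deformation; hence
\[
\mathrm{Sys}(S')=\min_{\gamma\in\Gamma}\ell_\gamma(S')\qquad\text{for }S'\text{ near }S.
\]
Therefore $S$ is a strict local maximum of $\mathrm{Sys}$ on the area one locus if and only if for every nonzero tangent vector $\xi$ at $S$ there is $\gamma\in\Gamma$ with $\partial_\xi\ell_\gamma<0$; equivalently (Gordan's lemma), $0$ lies in the interior of the convex hull of $\{\,d\ell_\gamma:\gamma\in\Gamma\,\}$ in the cotangent space of the area one locus. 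The problem thus reduces to producing surfaces whose shortest saddle connections point in ``enough'' directions in this positive-spanning sense, yet have systole strictly below the bound of Section~3.

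\textbf{The examples.} A surface tiled by equilateral triangles of side $s$ with \emph{all} triangle vertices at singularities has, by an Euler characteristic count, exactly $2(2g-2+r)$ triangles, hence area $\tfrac{\sqrt3}{4}s^2\cdot 2(2g-2+r)$; normalizing to area one forces $s=\bigl(\tfrac{\sqrt3}{2}(2g-2+r)\bigr)^{-1/2}$ and $S$ is a global maximum by Section~3. The examples I would build are therefore \emph{not} of this shape: they are assembled from a few polygonal bricks (such as copies of a small triangulated surface, or of the hexagonal torus, together with one or two additional cells) glued along slits whose length equals the common edge length $s$ of the bricks, so that (a) the gluing produces exactly the prescribed singularities and lands in the prescribed connected component, (b) the shortest saddle connections are precisely the brick edges and slit edges, all of length $s$, occurring in at least three pairwise non-parallel directions, and (c) the complementary cells of these saddle connections, while possibly not equilateral triangles, are \emph{rigid in context}: flexing such a cell is incompatible with the self-identifications imposed on it. Feature (c) is exactly what the positive-spanning criterion detects, and arranging it needs the topological room afforded by $g\ge 3$.

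\textbf{Verification and main obstacle.} Non-globality is then a direct computation showing $\mathrm{Sys}(S)$ is strictly below $\bigl(\tfrac{\sqrt3}{2}(2g-2+r)\bigr)^{-1/2}$ (equivalently, $S$ is visibly not built from equilateral triangles of side equal to its systole, so Section~3 applies). Local maximality requires the two steps dictated by the criterion: first, list \emph{all} shortest saddle connections and in particular check that no diagonal crossing a brick or a slit is as short as $s$ --- a finite verification for each family; second, establish $0\in\mathrm{int\,conv}\{d\ell_\gamma\}$ on the area one locus, using the large finite symmetry group of $S$ (which acts on $\Gamma$ with few orbits, forcing the differentials $d\ell_\gamma$ to be distributed symmetrically about $0$) to reduce to the elementary fact, already at the heart of Section~3, that one cannot lengthen all three sides of an equilateral triangle, nor all edges of a rigid cell, while preserving area. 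The main obstacle, I expect, is not any single computation but designing the bricks and gluings so that one uniform construction covers \emph{every} connected component of \emph{every} such stratum --- respecting the hyperelliptic and spin invariants where they occur and realizing an arbitrary partition of $2g-2$ --- while keeping both the contextual rigidity of the cells and the shortest-saddle-connection inventory under control.
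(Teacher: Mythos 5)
Your reduction of local maximality to a positive-spanning condition on the differentials $\{d\ell_\gamma\}_{\gamma\in\Gamma}$ is usable in the one direction you actually need: if $0$ lies in the interior of the convex hull of $\{d\ell_\gamma\}$, then by compactness of the unit sphere $\min_\gamma\partial_\xi\ell_\gamma\leq -c\|\xi\|$ uniformly, and $\mathrm{Sys}$ strictly decreases. But the stated equivalence is false --- a local maximum may be degenerate to first order --- and on the area-one locus no maximum is strict, since rotation preserves $\mathrm{Sys}$; the criterion only makes sense in $\mathbb{P}\mathcal{H}(k_1,\dots,k_r)$. The genuine gap, however, is that no surface is ever constructed. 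The whole content of the theorem is the existence of explicit examples in every stratum, and your proposal replaces this by a description of ``bricks glued along slits'' whose cells are ``rigid in context,'' together with an appeal to a ``large finite symmetry group'' to force the covectors $d\ell_\gamma$ into symmetric position about $0$. No such surface is exhibited, the inventory of its shortest saddle connections is not verified for a single case, generic slit constructions have trivial symmetry group, and the positive-spanning condition needs at least $\dim\mathbb{P}\mathcal{H}+1$ shortest saddle connections in general position --- precisely the global combinatorial constraint that must be engineered by hand. You flag this yourself as ``the main obstacle,'' which is an accurate self-assessment: what remains is the theorem, not a routine verification.

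For comparison, the paper proves a checkable sufficient criterion (Theorem~\ref{th:local:maxima}): if cutting along the shortest saddle connections decomposes $S$ into equilateral triangles and regular hexagons, with the union of triangles connected and every hexagon edge also a triangle edge, then $[S]$ is a strict local maximum in $\mathbb{P}\mathcal{H}$. Its proof is an area comparison rather than first-order bookkeeping: rigidity of the connected triangulated part shows that a deformation at distance $\varepsilon$, normalized so the systole stays equal to $1$, forces some triangle edge to have length at least $1+K\varepsilon$, so some triangle gains area linearly in $\varepsilon$ (Lemma~\ref{L:perturbation:triangle}), while each regular hexagon --- a critical point of area among hexagons with unit sides --- loses at most $O(\varepsilon^2)$ (Lemma~\ref{L:perturbation:hexagon}); hence the total area strictly increases. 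The paper then writes down five explicit base examples in $\mathcal{H}(4)$, $\mathcal{H}(2,2)$, $\mathcal{H}(2,0)$, $\mathcal{H}(1,1,0)$, $\mathcal{H}(1,1,0,0)$ and reaches every stratum with $g\geq 3$ by a slit-and-reglue surgery of a base example with a fully triangulated global maximum supplied by Lemma~\ref{lemma:decomp:triang:equil}. If you wish to pursue your dual formulation, note that for the paper's class of examples the convex-hull condition does hold, because $d(\mathrm{Area})$ restricted to each equilateral triangle and each regular hexagon lies in the strictly positive cone of the length differentials of its edges; but proving that is essentially the paper's argument again, and it still requires their explicit constructions to cover all strata.
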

The examples are obtained by considering surfaces that decompose into equilateral triangles and regular hexagons, with some further conditions (see Theorem~\ref{th:local:maxima} for a precise statement).


In the last section, we study the relation between (locally) maximal values of the function $\mathrm{Sys}$ and the (locally) maximal number of shortest saddle connections. We call a surface \emph{rigid} if it corresponds to a local maximum of the number of shortest saddle connections. While the connection is clear for global maxima (see Proposition~\ref{prop:max:number}), the situation is more complex for the local maxima. The examples that we provide for local maxima of the function $Sys$ are rigid. Even more, a surface that is a local maximum of the function $Sys$ and that decomposes into equilateral triangles and regular hexagons must be rigid (Proposition~\ref{prop:loc:block}). However, rigid surfaces are not necessarily local maxima (see Proposition~\ref{prop:block:nonloc}).

The authors thank Carlos Matheus for pointing out a small mistake in the first version of the paper, and the anonymous referee for the improvement suggestions.

\section{Background}
A \emph{translation surface} is a (real, compact, connected) genus $g$ surface $S $ with a translation atlas \emph{i.e.} a triple $(S,\mathcal U,\Sigma)$ such that $\Sigma$ is a finite subset of $S$ (whose elements are called {\em singularities}) and $\mathcal U = \{(U_i,z_i)\}$ is an atlas of $S \setminus \Sigma$ whose transition maps are translations of $\mathbb{C}\simeq \mathbb{R}^2$. We will require that  for each $s\in \Sigma$, there is a neighborhood of $s$ isometric to a Euclidean cone whose total angle is a multiple of $2\pi$.  One can show that the holomorphic structure on $S\setminus \Sigma$ extends to $S$ and that the holomorphic 1-form $\omega=dz_i$ extends to a holomorphic $1-$form on $X$ where  $\Sigma$ corresponds to the zeroes of $\omega$ and maybe some marked points. We usually call $\omega$ an \emph{Abelian differential}.  A zero of $\omega$ of order $k$ corresponds to a singularity of angle $(k+1)2\pi$. By a slight abuse of notation, we authorize the order of a zero to be 0, in this case it corresponds to a regular marked point.
A saddle connection is a geodesic segment joining two singularities (possibly the same) and with no singularity in its interior. Integrating $\omega$ along the saddle connection we get a complex number. Considered as a planar vector, this complex number represents the affine holonomy vector of the saddle connection. In particular, its Euclidean length is the modulus of its holonomy vector. 

For $g \geq 1$, we define the moduli space of Abelian differentials $\mathcal{H}_g$ as the moduli space of pairs $(X,\omega)$ where $X$ is a genus $g$ (compact, connected) Riemann surface and $\omega$ non-zero holomorphic $1-$form defined on $X$. The term moduli space means that we identify the points $(X,\omega)$ and $(X',\omega')$ if there exists an analytic isomorphism $f:X \rightarrow X'$ such that 
$f^*\omega'=\omega$.
The group $\SL(2,\mathbb R)$ naturally acts on the moduli space of translation surfaces by post composition on the charts defining the translation structures.

One can also see a translation surface obtained as a polygon (or a finite union of polygons) whose sides come by pairs, and for each pair, the corresponding segments are parallel and of the same length. These parallel sides are glued together by translation and we assume that this identification preserves the natural orientation of the polygons. In this context, two translation surfaces are identified in the moduli space of Abelian differentials if and only if the corresponding polygons can be obtained from each other by cutting and gluing and preserving the identifications. Also, the $\SL(2,\mathbb{R})$ action in this representation is just the natural linear action on the polygons. 	

The moduli space of Abelian differentials is stratified by the  combinatorics of the zeroes; we will denote by $\mathcal{H}(k_1,\ldots ,k_r)$ the stratum of $\mathcal{H}_g$, where $\sum_i k_i=2g-2$, consisting of (classes of) pairs $(X,\omega)$ such that $\omega$ has exactly $r$ zeroes, of order $k_1,\dots ,k_r$. 
This space is  (Hausdorff) complex analytic (see for instance \cite{Ma82, Vee82, Vee90}). We often restrict to the subset $\mathcal{H}_1(k_1,\dots ,k_r)$ of \emph{area one} surfaces.
 Local coordinates for a stratum of Abelian differentials are obtained by integrating the holomorphic 1--form along a basis of the relative homology $H_1(S,\Sigma;\mathbb{Z})$, where $\Sigma$ denotes the set of conical singularities of $S$.

\section{Maximal systole}
We recall that the systole $\mathrm{Sys}(S)$ of a translation surface $S$ is the length of the shortest saddle connection of $S$. The aim of this section is to prove Theorem~\ref{th:max:sys} which characterizes translation surfaces of area one with maximal systole. One key tool is Delaunay triangulation.
 
Let $S$ be a translation surface. A \textit{Delaunay triangulation} $S$ is a triangulation of $S$ such that the vertices are singularities, the 1-cells (the sides of the triangles) are saddle connections and, for a 2-cell (triangle) $T$ of the triangulation, the circumcircle of any representative $\tilde T$ of the universal covering does not have any singularity in its interior.

In Section~4 of \cite{MS91} Masur and Smillie prove the existence of Delaunay triangulations for every translation surface $S$.

\begin{lemma}
\label{L:ShortSaddleDelaunay}
All shortest saddle connections of $S$ are 1-cells in every Delaunay triangulation of $S$.
\end{lemma}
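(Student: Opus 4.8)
The plan is to argue by contradiction: suppose $\gamma$ is a shortest saddle connection of $S$ that is not a $1$-cell in some Delaunay triangulation $\mathcal{T}$ of $S$. Since $\mathcal{T}$ triangulates $S$, the segment $\gamma$ must cross through the interiors of some triangles and $1$-cells of $\mathcal{T}$. First I would pass to the universal cover (or a sufficiently large developing image in $\mathbb{R}^2$), lift $\gamma$ to a straight segment $\tilde\gamma$ with endpoints at lifts of singularities, and consider the first triangle $\tilde T$ of the lifted triangulation that $\tilde\gamma$ enters at its initial endpoint $p$. The key geometric fact I want to exploit is the defining property of Delaunay triangulations: the open circumdisk of $\tilde T$ contains no singularity. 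I would then locate the endpoint $q$ of $\tilde\gamma$ relative to this circumdisk $D$ and the triangle $\tilde T$.

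The main step is a planar estimate. Because the three vertices of $\tilde T$ are singularities and the sides of $\tilde T$ are saddle connections, each side of $\tilde T$ has length at least $\mathrm{Sys}(S) = |\gamma|$. I want to show that this forces $q$ to lie inside the circumdisk $D$ of $\tilde T$, contradicting the Delaunay property. Concretely: $\tilde\gamma$ leaves $p$ into the interior of $\tilde T$, so for a short initial segment it stays inside $D$; I need that it cannot exit $D$ before reaching $q$, i.e. that $|pq| = |\gamma|$ is smaller than the distance from $p$ to the far boundary of $D$ along the direction of $\tilde\gamma$. The cleanest way is to bound the circumradius $\rho$ of $\tilde T$ from below in terms of its shortest side, and to observe that any chord of $D$ emanating from the vertex $p$ has length at least\ldots hmm — actually the right comparison is the other way, so instead I would argue: if $q$ were outside $\bar D$, then $\tilde\gamma$ crosses $\partial D$, hence crosses some side of $\tilde T$ (as $\tilde\gamma$ starts inside $\tilde T$ and the part of $\partial D$ reachable without crossing a side is limited), and beyond that side lies a new triangle whose circumdisk again excludes all singularities; iterating, $q$ is never reachable within length $<$ the relevant chord/side lengths, all of which are $\geq |\gamma|$. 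The honest version: each time $\tilde\gamma$ crosses a $1$-cell $e$ of $\mathcal{T}$, it enters a new triangle, and I claim the total length consumed exceeds $|\gamma|$; this uses that $e$ has length $\geq|\gamma|$ together with a convexity/angle argument showing the crossing cannot be too oblique near $p$ because $p$ is a vertex of $\tilde T$.

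I expect the main obstacle to be making the ``$\tilde\gamma$ cannot escape'' step fully rigorous without hand-waving about the combinatorics of which triangles $\tilde\gamma$ meets. The delicate point is the very first crossing: $\tilde\gamma$ emanates from the vertex $p$ of $\tilde T$ into its interior, so it must cross the opposite side $e$ of $\tilde T$, and the foot of that crossing, together with $p$ and the endpoints of $e$, should be arranged so that the sub-triangle cut off has a circumdisk forcing $q\in D$. I would handle this by a direct lemma: \emph{if a triangle has all sides of length $\geq \ell$, and a point $q$ lies in the closed half-plane on the far side of one side from the opposite vertex $p$, with $|pq|\le\ell$, then $q$ lies in the closed circumdisk of the triangle, with equality only in degenerate configurations.} Proving this planar lemma — essentially an inscribed-angle computation comparing the angle $\angle$ subtended by a side at $p$ versus at points beyond it — is where the real work lies; once it is in hand, applying it to the first triangle $\tilde T$ crossed by $\tilde\gamma$ (and using the Delaunay property of the \emph{neighboring} triangle across $e$ if $q$ lies strictly beyond $e$) yields the contradiction. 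Finally I would note equality in the Delaunay condition (a cocircular configuration) can be ruled out, or absorbed, by choosing the Delaunay triangulation appropriately, so that $\gamma$ is genuinely an edge.
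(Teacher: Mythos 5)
Your overall strategy is the paper's: pass to the developed picture, take the triangle $T$ of the Delaunay triangulation having the starting vertex $P$ of $\sigma$ as a vertex and whose interior $\sigma$ enters, and use the empty circumdisk to derive a contradiction. However, the planar lemma you isolate as ``where the real work lies'' is false as stated, and since you leave its proof open this is a genuine gap. Take $\ell=1$, $p=(-1,\tfrac12)$, $a=(0,0)$, $b=(3,0)$, $q=(-1,-\tfrac12)$: all sides of $\triangle pab$ have length at least $1$, $q$ lies in the closed half-plane below the line $ab$ (the far side from $p$), and $|pq|=1$; yet the circumcenter is $(1.5,\,4.25)$ with circumradius $\approx 4.51$, while the distance from $q$ to the circumcenter is $\approx 5.37$, so $q$ is outside the closed circumdisk. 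What your formulation drops is precisely the hypothesis that saves the argument and that does hold in the application: the segment $pq$ leaves $p$ \emph{inside the angular sector of $T$ at $p$}, so the ray through $q$ crosses the open side $ab$ and exits the circumdisk through the open arc $ab$ not containing $p$. The correct estimate --- the one the paper uses --- is that every chord from $p$ to a point of that open arc has length strictly greater than $\min(|pa|,|pb|)\geq \mathrm{Sys}(S)$: writing the chord length as $2R\sin(\alpha/2)$ in the central angle $\alpha$, strict concavity of $\sin(\alpha/2)$ on $(0,2\pi)$ forces the minimum over the closed arc to occur only at the endpoints $a,b$. Since the other endpoint of $\sigma$ is a singularity and therefore cannot lie in the open circumdisk, $\sigma$ must contain that entire chord, whence $|\sigma|>\mathrm{Sys}(S)$.

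Two further points. The iteration over successive triangles that you sketch is unnecessary: the first triangle already gives the contradiction, exactly because the chord estimate exceeds $\mathrm{Sys}(S)$. And your closing plan to handle cocircular configurations ``by choosing the Delaunay triangulation appropriately'' is not available --- the lemma is asserted for \emph{every} Delaunay triangulation; fortunately no choice is needed, since the strictness comes from the openness of the arc and not from strictness of the empty-disk condition.
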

\begin{proof}
Let $\sigma$ be a saddle connection that is not included in a Delaunay triangulation $\mathcal{T}$. Denote by $P,Q$ the extremities of $\sigma$. Let $T \in \mathcal{T}$ be the triangle in $\mathcal{T}$ with $P$ as a vertex and containing a subsegment of $\sigma$.  Let $P',P''$ be the other vertices of $T$ (see Figure~\ref{fig:lemma:3.1}).

\begin{figure}
\begin{tikzpicture}[scale=1]

\draw (0,0) circle (2);
\draw [very thick] (1.2,-1.6) arc (-53:70:2) node {$\bullet$} node[above right] {$P''$};
\draw (1.2,-1.6) node{$\bullet$} node[below] {$P'$};
\draw (-2,0)  node{$\bullet$} node[left] {$P$};
\draw (-2,0) -- (2.4,0) node[midway, below] {$\sigma$} node {$\bullet$} node[right] {$Q$};
\draw[dashed] (-2,0) -- (1.2,-1.6);
\draw[dashed] (-2,0) -- (0.75,1.9);
\draw (-1.8,1.3) node {$c$} ;

\end{tikzpicture}
\caption{Illustration of Lemma~\ref{L:ShortSaddleDelaunay}}
\label{fig:lemma:3.1}
\end{figure}

Consider the circumcircle $c$ of $T$, and the open arc of $P'P''$ that does not contain $P$. Each chord of $c$ joining $P$ to an element of this arc is of length strictly greater than $\min(d(P,P'),d(P,P''))\geq \mathrm{Sys}(S)$. One of these chords is in the direction of $\sigma$ and since there is no singularity in the interior of $c$, this chord is a subsegment of $\sigma$. Therefore, $\sigma$ is not a shortest saddle connection.

\end{proof}

The first statement of the following lemma is needed for the proof of the next theorem. The second statement will be useful for Theorem~\ref{th:loc:max:in:each:stratum}. 
\begin{lemma}\label{lemma:decomp:triang:equil}
Let $\mathcal{C}\subset \mathcal{H}(k_1,\dots ,k_r)$ be a connected component of a stratum of abelian differentials with $k_1,\dots ,k_r\geq 0$.  
\begin{enumerate}
\item There exists in $\mathcal{C}$ a surface $S$ that decomposes into equilateral triangles with sides saddle connections. 
\item Furthermore, for each  $i, j$  we can find such a surface with a side of  an equilateral triangle being a saddle connection joining a singularity of degree $k_i$ to a singularity of degree $k_j$, with the convention that the two singularities are different if $i\neq j$ and equal if $i=j$.
\end{enumerate} 
\end{lemma}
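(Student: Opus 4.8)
The plan is to reformulate the statement via branched coverings of the hexagonal torus and then to prove it by induction, using surgeries that keep all pieces equilateral. Since rescaling is a path in $\GL^+(2,\R)$ and so stays inside $\mathcal{C}$, it suffices to produce, in each connected component, a surface tiled by equilateral triangles of side $1$.

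The first step is a dictionary. A translation surface decomposes into equilateral unit triangles with edges saddle connections if and only if it is a connected branched covering $\pi\colon S\to E$ of the hexagonal torus $E=\C/\Lambda$, $\Lambda=\Z\oplus\Z e^{i\pi/3}$, ramified only over the single vertex $v_0$ of the natural two-triangle tiling of $E$, with ramification profile $(k_1+1,\dots,k_r+1)$ there and with no regular point of $S$ over $v_0$; equivalently, $\deg\pi=n:=2g-2+r$ and $S$ is made of $2n$ triangles. For the forward implication one develops the tiling, sending a singularity to $0$; since every edge then develops to a sixth root of unity, all relative periods lie in $\Lambda$, so the developing map descends to such a $\pi$, and the degree is forced to be minimal because each lattice point over $v_0$ is a vertex of the triangulation and hence a singularity. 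The converse is the pullback of the tiling of $E$. Under this dictionary, statement (1) says exactly that every connected component $\mathcal{C}$ of $\mathcal{H}(k_1,\dots,k_r)$ contains such a covering; coverings of the required combinatorial type certainly exist (the Riemann--Hurwitz relation is automatic), so the whole content is to produce one in each prescribed component. Note that the usual density argument for square-tiled surfaces, after clearing denominators and rescaling, only yields a covering of $E$ of possibly large degree --- that is, a tiling with extra regular vertices --- and so does not by itself give the statement.

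I would prove this by induction on $n$. For the finitely many components occurring for small $n$ one writes down explicit tilings. For the inductive step I would use two surgeries, each carried out while keeping every piece an equilateral unit triangle: \emph{splitting a zero} of order $k=k'+k''$ into zeros of orders $k'$ and $k''$ --- a local move that inserts exactly two new triangles and produces the two new zeros joined by a new unit edge --- and \emph{attaching a handle}, which raises the genus by one and is again a local insertion of a bounded block of triangles. The Kontsevich--Zorich classification tells one which components are reached from the base cases by iterating these moves; what must be verified is that each move can genuinely be performed inside the equilateral class and that its effect on the spin parity (read off directly from the tiling) and on hyperellipticity matches the known effect on components, so that one reaches every component and not merely one per invariant. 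For statement (2) I would arrange the splitting $k_i+k_j\mapsto(k_i,k_j)$ to be the last surgery, starting from an equilateral-tiled surface in a component of $\mathcal{H}(\dots,k_i+k_j,\dots)$ from which this splitting lands in $\mathcal{C}$; by construction the output then has an edge of an equilateral triangle joining the degree-$k_i$ singularity to the degree-$k_j$ singularity.

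I expect the main obstacle to lie entirely in the inductive step: realising the handle attachment (and, to a lesser extent, the zero splitting) explicitly inside the equilateral category while controlling the spin parity of the result, together with the separate, ad hoc treatment of the hyperelliptic components and of the sporadic low-genus components, which cannot be obtained from the generic surgery.
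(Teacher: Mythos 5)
Your reformulation in terms of minimal-degree covers of the hexagonal torus branched over one point is correct, and your observation that the usual square-tiled density argument only produces tilings with extra regular vertices (hence edges that are not saddle connections) is a genuine subtlety worth noting. But the proposal does not prove the lemma: the entire inductive step --- realising the zero-splitting and handle-attaching surgeries by inserting explicit blocks of equilateral triangles, computing the spin parity of the resulting tiling, and treating the hyperelliptic and sporadic low-genus components separately --- is exactly what you defer to "the main obstacle", and none of it is carried out. In effect you are proposing to re-run the Kontsevich--Zorich classification of connected components inside the equilateral category; the component bookkeeping (which splitting of which zero lands in which component, how a handle insertion changes the Arf invariant read off the triangulation) is the hard part of that classification, not a routine verification, so as written the argument has a hole precisely where the content of the lemma sits.

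There is a much shorter route, which is the one the paper takes and which uses Kontsevich--Zorich only through a single quotable statement: every connected component contains a surface with a one-cylinder horizontal decomposition, i.e.\ a single rectangle with its two vertical sides identified and each horizontal side partitioned into horizontal saddle connections carrying all the singularities. The lengths of these horizontal saddle connections can be chosen freely, so one normalises them all to $1$ and obtains a square-tiled surface whose square vertices are exactly the singularities --- no extra regular vertices, which is the point your density remark worried about. Applying the element of $\GL^+(2,\R)$ fixing the horizontal and sending the vertical unit vector to $(\tfrac12,\tfrac{\sqrt3}{2})$ stays in the same connected component and turns each unit square into two equilateral triangles whose sides (including the new diagonals) are saddle connections; this proves (1). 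For (2), since every singularity occurs both on the top and on the bottom of the cylinder, applying a suitable power of $\left(\begin{smallmatrix}1 & n\\ 0 & 1\end{smallmatrix}\right)$ and cutting and pasting produces a vertical unit saddle connection from the degree-$k_i$ singularity to the degree-$k_j$ one before shearing. If you want to keep your surgery-based plan, you must actually construct the insertion blocks and verify the parity and hyperellipticity statements; otherwise I recommend replacing the induction by the one-cylinder argument.
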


\begin{proof}
We first prove (1). By Lemma~18 in \cite{KoZo} there exists in each connected component of each stratum a surface with a horizontal one cylinder decomposition. Up to a shear transformation that creates a vertical saddle connection, such surface can be described as a rectangle with the two vertical sides identified that  correspond to a saddle connection, and each horizontal side decomposes into horizontal saddle connections (each one appearing on the top and on the bottom). We can freely change the lengths of these saddle connections hence we can assume they are all of length one, and get a square tiled surface with singularities in each corner of the squares. Now we rotate the vertical one until it makes an angle of $\pi/3$ with the horizontal ones (see Figure~\ref{squares:to:triangles}), this gives the surface $S$ required. 
\begin{figure}
\begin{tikzpicture}[scale=1]
\coordinate (v0) at (1,0);
\coordinate (v1) at (0.5,0.866);
\coordinate (v2) at ($(v0)-(v1)$);
\coordinate (u) at (0,1);
\coordinate (v3) at ($-1*(v0)$);
\coordinate (v4) at ($-1*(v1)$);

\draw (0,0) --++ (v0) node{$\bullet$} --++ (v0) node{$\bullet$} --++ (v0) node{$\bullet$} --++ (v0) node{$\bullet$} --++ (v0) node{$\bullet$} --++ (v0) node{$\bullet$} --++ (v0) node{$\bullet$} --++ (v0) node{$\bullet$} --++ (u) node{$\bullet$} --++ (v3) node{$\bullet$} --++ (v3) node{$\bullet$}--++ (v3) node{$\bullet$}--++ (v3) node{$\bullet$}--++ (v3) node{$\bullet$}--++ (v3) node{$\bullet$}--++ (v3) node{$\bullet$}--++ (v3) node{$\bullet$} --++ (0,-1) node{$\bullet$};

\draw (0,-3) --++ (v0) node{$\bullet$} --++ (v0) node{$\bullet$} --++ (v0) node{$\bullet$} --++ (v0) node{$\bullet$} --++ (v0) node{$\bullet$} --++ (v0) node{$\bullet$} --++ (v0) node{$\bullet$} --++ (v0) node{$\bullet$} --++ (v1) node{$\bullet$} --++ (v3) node{$\bullet$} --++ (v3) node{$\bullet$}--++ (v3) node{$\bullet$}--++ (v3) node{$\bullet$}--++ (v3) node{$\bullet$}--++ (v3) node{$\bullet$}--++ (v3) node{$\bullet$}--++ (v3) node{$\bullet$} --++ (v4) node{$\bullet$};

\draw ($(0,-3)+(v1)$)--++(v2)--++(v1)--++(v2)--++(v1)--++(v2)--++(v1)--++(v2)--++(v1)--++(v2)--++(v1)--++(v2)--++(v1)--++(v2)--++(v1)--++(v2);

\draw[very thick,->] (4,-0.5)--++(0,-1);
\end{tikzpicture}
\caption{Surface with a equilateral triangle decomposition}
\label{squares:to:triangles}
\end{figure}

The proof of (2) is a small variation of the above proof: observe first that each singularity appears both on the top line and on the bottom line of the cylinder. Recall that $\SL(2,\mathbb{R})$ acts on the connected component of the stratum by linear action on the polygons. Then applying the matrix $\left(\begin{smallmatrix} 1&n \\ 0 & 1 \end{smallmatrix} \right)$ and suitably cutting and pasting we obtain a new rectangle. For a suitable $n$ there is a vertical length one saddle connection joining the singularity of degree $k_i$ to the singularity of degree $k_j$, and the above argument finishes the proof.

\end{proof}

\begin{theorem}\label{th:max:sys}
Let $S$ be a genus $g\geq 1$ translation surface of area one and $r>0$ singularities or marked points. Then
$$\mathrm{Sys}(S) \leq \left(\frac{\sqrt{3}}{2}(2g-2+r)\right)^{-\frac{1}{2}}.$$
The equality is obtained if and only if $S$ is built with equilateral triangles with sides saddle connections of length $\mathrm{Sys}(S)$. Such a surface exists in any connected component of any stratum.
\end{theorem}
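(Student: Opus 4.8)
The plan is to combine a Delaunay triangulation area estimate with an equilateral-triangle rigidity argument. First I would fix a Delaunay triangulation $\mathcal{T}$ of $S$ (which exists by Masur--Smillie). Write $F$, $E$, $V$ for the numbers of triangles, edges and vertices of $\mathcal{T}$; since $S$ is closed of genus $g$ and every vertex is a singularity, $V=r$ and the Euler characteristic relation $V-E+F=2-2g$ together with $2E=3F$ gives $F=2(2g-2+r)$. Each 2-cell of $\mathcal{T}$ is a Euclidean triangle whose side lengths are the moduli of saddle connection holonomies, hence each side has length at least $\mathrm{Sys}(S)$. The key geometric point is that among all Euclidean triangles with all sides of length $\geq \ell$ and with empty circumcircle (the Delaunay condition), the minimal possible area is achieved by the equilateral triangle of side $\ell$, which has area $\tfrac{\sqrt 3}{4}\ell^2$. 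Summing over the $F$ triangles yields
$$1 = \mathrm{Area}(S) \geq F\cdot \frac{\sqrt3}{4}\,\mathrm{Sys}(S)^2 = \frac{\sqrt3}{2}(2g-2+r)\,\mathrm{Sys}(S)^2,$$
which rearranges to the claimed inequality.

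For the equality case I would trace through when the above chain of inequalities is tight: equality forces every triangle of $\mathcal{T}$ to be equilateral with side exactly $\mathrm{Sys}(S)$, so $S$ is tiled by equilateral triangles whose sides are saddle connections of length $\mathrm{Sys}(S)$, as asserted. Conversely, if $S$ decomposes into equilateral triangles of side $s$ with sides saddle connections, then first $s$ is the systole (a saddle connection shorter than $s$ would have to cross the interior of some triangle, which is impossible since a triangle of side $s$ contains no chord between boundary points of length $<s$ other than portions of its own sides), and second such a triangulation is automatically Delaunay (the circumradius of an equilateral triangle of side $s$ is $s/\sqrt3<s$, so no vertex, being at distance $\geq s$ from the triangle's vertices, can lie strictly inside the circumcircle); applying the area count with equality throughout gives $\mathrm{Sys}(S)=\left(\frac{\sqrt3}{2}(2g-2+r)\right)^{-1/2}$. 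The main obstacle in this part is justifying carefully that an equilateral-triangle decomposition really is a Delaunay triangulation and that nothing shorter than a side can appear as a saddle connection; this requires a short but careful local analysis of chords of an equilateral triangle and of the angle condition at each vertex.

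The final sentence, existence of such a surface in every connected component of every stratum, is exactly part~(1) of Lemma~\ref{lemma:decomp:triang:equil}: that lemma produces, in each connected component of each stratum $\mathcal{H}(k_1,\dots,k_r)$, a surface decomposing into equilateral triangles with sides saddle connections. Rescaling that surface to area one (which does not change the combinatorics of the decomposition) yields a representative attaining equality in the theorem. So once the equality characterization is in hand, this clause is immediate and needs only a one-line citation of the lemma together with the area-normalisation remark. I expect the triangle-area extremality step and the verification of the Delaunay property for equilateral decompositions to be the parts needing the most care; the combinatorial Euler-characteristic bookkeeping and the invocation of Lemma~\ref{lemma:decomp:triang:equil} are routine.
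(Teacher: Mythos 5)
Your outline follows the paper up to a point (Delaunay triangulation, Euler characteristic count $F=2(2g-2+r)$, equality analysis, and the appeal to Lemma~\ref{lemma:decomp:triang:equil} for existence), but the central step is wrong. You claim that among Euclidean triangles with all sides of length $\geq \ell$ satisfying the Delaunay (empty circumcircle) condition, the minimal area is $\tfrac{\sqrt3}{4}\ell^2$, attained by the equilateral triangle. This is false: a triangle with sides $\ell,\ \ell,\ 2\ell-\varepsilon$ has all sides $\geq\ell$ and area tending to $0$ as $\varepsilon\to 0$, and nothing in the empty-circumcircle condition excludes such a cell --- that condition constrains the positions of the \emph{other} singularities, not the shape of the triangle itself. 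The paper explicitly warns about this (``some triangles in the Delaunay triangulation might have small area''), so summing per-triangle area lower bounds cannot give $\mathrm{Area}(S)\geq F\cdot\tfrac{\sqrt3}{4}\,\mathrm{Sys}(S)^2$; the inequality you need per triangle simply does not hold.

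The paper's proof repairs exactly this. Instead of bounding $\mathrm{Area}(T)$ from below, it passes to the Voronoi diagram: decomposing each Voronoi cell into triangles based at its singularity, whose heights are half-lengths of saddle connections and hence $\geq \tfrac12$ (after normalizing $\mathrm{Sys}(S)=1$), one gets $\mathrm{Area}(S)\geq \tfrac12\sum_e |e|$ over Voronoi edges, and this sum equals $\sum_T \sigma(T)$ where $\sigma(T)$ is the sum of signed distances from the circumcenter of $T$ to its sides. By Carnot's theorem $\sigma(T)=R+r$, and the quantity $R+r$ (unlike the area) \emph{is} minimized, among triangles with all sides $\geq 1$, by the unit equilateral triangle, with minimum $\tfrac{\sqrt3}{2}$; proving this requires the reduction to isosceles triangles and the monotonicity computation in the paper. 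So the correct per-triangle functional is $R+r$, not the area, and establishing its extremality is the real content of the proof. Your equality discussion and the existence clause are fine as stated, but they rest on an inequality you have not actually proved.
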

\begin{proof}
For simplicity, instead of looking at a translation surface of area one and trying to determine the longest systole possible, we suppose that $S$ has a systole of length $1$ and we try to minimize the area $\mathcal{A}(S)$.

We consider a Delaunay triangulation of $S$ given by saddle connections. By Lemma~\ref{L:ShortSaddleDelaunay} all shortest saddle connections of $S$ are 1-cells in this triangulation. Note that some triangles in the Delaunay triangulation might have small area.

 We consider the Voronoi diagram of $S$. This is a partitioning of $S$ into cells. Each cell contains exactly one singularity and is the set of points of $S$ that are closer to that singularity than to any other. The boundary of each cell consists of points that are equidistant to at least two singularities in the sense that there are at least two different distance realizing geodesics of equal length connecting the point with a singularity  (see Section~4 of \cite{MS91} for reference). 

 The boundaries of the cells of the Voronoi diagram are parts of the orthogonal bisectors of the saddle connection in the Delaunay triangulation. Even though the triangulation is not unique, the Voronoi diagram is unique.

 We can compute $\mathcal{A}(S)$ as the sum of the areas of the triangles with one of the vertices a singularity and its opposite side a side of the Voronoi cell containing the singularity (see Figure~\ref{fig:Del:Vor}). The height of such a triangle is a half of a saddle connection and hence its length is greater than or equal to $\frac 1 2$. Therefore $\mathcal{A}(S)$ is greater or equal to one half of the sum of the lengths of all the sides of the Voronoi cells.
 
 \begin{figure}[htb]
\begin{tikzpicture}[scale=1.5]
\fill[gray!25] (0,0)--(1,-1) -- (1,0.5) --cycle;

\draw (0,0) node {$\bullet$} node[above] {$A$};
\draw (-1.5,-0.5)--(1,-1) -- (1,0.5) -- (-0.5,2)-- (-2,0.5) ;
\draw (0,0)--(2,0) node {$\bullet$} node[above] {$B$};
\draw (0,0)--++ ($1.5*(1,1)$) node {$\bullet$} node[right] {$C$} -- (2,0);
\draw[dashed] (-2,0.5)--(-1.5,-0.5);
\draw (1.1,0)--(1.1,0.1)--(1,0.1);
\draw (0.82,0.82)--(0.75,0.89)--(0.68,0.82);
\draw (-1.3,0.5) node {$\mathcal{V}(A)$};
\end{tikzpicture}
\caption{A Delaunay triangle $T=\triangle ABC$, with the Voronoi cell $\mathcal{V}(A)$ containing $A$. The area of the gray triangle is at least one half times   the length of corresponding side of the Voronoi cell.}
\label{fig:Del:Vor}
\end{figure}

For each triangle $T$ in the Delaunay triangulation we consider the sum $\sigma(T)$ of the signed distances from the circumcenter of $T$ to its sides. The sum of the lengths of all the sides of the Voronoi cells equals the sum of $\sigma(T)$ of all $T$ in the triangulation. We want to bound from below $\sigma(T)$ for each triangle $T$.

By Carnot's theorem\footnote{Lazare Carnot 1753-1823.} $\sigma(T)$ is equal to the sum of the inradius and the circumradius of $T$ (see for instance \cite{ref:Carnot}). Hence by Lemma~\ref{lem:Rr}, $\sigma(T)\geq \frac{\sqrt 3}{2}$ with equality if and only if $T$ is equilateral of side 1.

The number of triangles in the triangulation is $2(2g-2+r)$. Hence $\mathcal{A}(S) \geq \frac{\sqrt{3}}{2}(2g-2+r)$ if the systole is of length $1$. Thus for a translation surface of area one, we have that the systole is at most $\left(\frac{\sqrt{3}}{2}(2g-2+r)\right)^{-\frac{1}{2}}$ and can be obtained only if $S$ is built with equilateral triangles with sides saddle connections of length $\mathrm{Sys}(S)$.

We conclude by using the first statement of Lemma~\ref{lemma:decomp:triang:equil}.
\end{proof}

\begin{lemma}\label{lem:Rr}
Let $T$ be a nondegenerate Euclidean triangle with sides of length at least 1.  Then, the sum of the circumradius and the inradius of $T$ is at least $\frac{\sqrt 3}{2}$, with equality if and only if $T$ is equilateral of side~1.
\end{lemma}
\begin{proof}
Denote by $R$  the circumradius and by $r$ the inradius of $T$.
First we note that when we shrink $T$ we decrease the sum $R+r$. So without loss of generality, we can assume that at least one of the sides of $T$ is of length $1$. So for the triangle $T=\triangle ABC$ with $1=AB \leq BC \leq AC$ we take a point $D$ on the side $BC$ so that $BD=AB$. Note that $AD\geq 1$.  For the  inradius $\tilde{r}$ and the circumradius $\tilde{R}$ of the isosceles $\triangle ABD$ we can see that $\tilde{r}\leq r$ and $\tilde{R}\leq R$. Indeed, the circumcenter of $\triangle ABD$ is nearer to $AB$ than the circumcenter of $\triangle ABC$ and therefore $\tilde{R}\leq R$.  And to obtain that $\tilde{r}\leq r$, we note that the incenter of $\triangle ABD$ is nearer to $B$ than the incenter of $\triangle ABC$.

For a triangle with sides $1,1$ and $x$, we can find the inradius and the circumraduis with the help of the lengths of the sides:
$$ \tilde{R}(x) = \sqrt \frac{1}{4 - x^2}, \qquad \tilde{r}(x) = \frac{x}{2} \sqrt \frac{2-x}{2+x},$$
with $x \in [1,2)$. For the sum $(\tilde{R}+\tilde{r})(x)$ and its derivative we obtain
$$(\tilde{R}+\tilde{r})(x) = \frac{2+2x-x^2}{2\sqrt{4-x^2}}, \quad (\tilde{R}+\tilde{r})'(x) = \frac{8-6x+x^3}{2\sqrt{(4-x^2)^3}.}$$
Since $8-6x+x^3=x(1-x)^2+2(2-x)^2+x > 0$ for $x\in [1,2)$, we have that $(\tilde{R}+\tilde{r})(x)$ is strictly increasing in the interval $[1,2)$ and hence obtains its minimum for $x=1$. Therefore $R+r \geq \tilde{R}+\tilde{r} \geq \frac{\sqrt{3}}{2}$ with equality exactly when the triangle $T$  is equilateral with side 1.

\end{proof}

\section{Locally maximal systole}
The question is if there exists local but not global maxima in any given stratum $\mathcal{H}_1(k_1,\ldots,k_r)$ of translation surfaces of area one. Note that such maxima is never strict since rotating a translation surface preserves the systole. We denote by $\mathbb{P}\mathcal{H}(k_1,\dots ,k_r)$ the moduli space of translation surfaces in $\mathcal{H}(k_1,\dots ,k_r)$ up to rotation and scaling. The systole function is well defined in $\mathbb{P}\mathcal{H}(k_1,\dots ,k_r)$:  for $[S]\in \mathbb{P}\mathcal{H}(k_1,\dots ,k_r)$, we define $\mathrm{Sys}([S])$ to be $\mathrm{Sys}(S)$, where $S$ is any area one representative of $[S]$.

In this section, we show examples of local maxima of the function $\mathrm{Sys}$ that are not global and prove that such examples are realized in all but a finite number of strata.

We need first, for technical reasons, to define a distance around a point in $\mathcal{H}(k_1,\ldots,k_r)$ and in $\mathbb{P}\mathcal{H}(k_1,\ldots,k_r)$. Let $S_0\in \mathcal{H}(k_1,\ldots,k_r) $. Fix a basis of the relative homology given by saddle connections that determines local coordinates $(v_1,\dots ,v_k)$ around $S_0$. Then for $S$ in a sufficiently small neighborhood of $S_0$, we define $d(S,S_0)=\max_i\{|v_i-v_{i_0}|\}$.

We will identify a sufficiently small neighborhood of an element $[S_0]\in \mathbb{P}\mathcal{H}(k_1,\dots ,k_r)$, with the subset of representatives in $\mathcal{H}(k_1,\dots ,k_r)$ normalized in the following way:
\begin{enumerate}
\item the first coordinate $v_1$ is in $]0,+\infty [$,
\item the length of the shortest saddle connection is 1.
\end{enumerate}
Then, the distance to $[S_0]$ is the distance in $\mathcal{H}(k_1,\ldots,k_r)$ following this identification.

\begin{theorem} \label{th:local:maxima}
Let $S_{reg}$ be a translation surface in $\mathcal{H}_1(k_1,\ldots,k_r)$ such that when cut along its saddle connections of length $\mathrm{Sys}(S_{reg})$, it decomposes to equilateral triangles and regular hexagons so that:
\begin{itemize}
\item  the set of the equilateral triangles without the vertices is connected, 
 \item  the boundary of each polygon is contained in the boundary of the set of triangles. 
 \end{itemize}
Then $\mathrm{Sys}(S_{reg})$ is a local maximum in $\mathcal{H}_1(k_1,\ldots,k_r)$ and even a strict local maximum in $\mathbb{P}\mathcal{H}(k_1,\ldots,k_r)$.
\end{theorem}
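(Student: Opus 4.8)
The plan is to pass to the scale- and rotation-invariant picture. Since neither $\mathrm{Sys}$ nor the area is affected by rotating or rescaling, Theorem~\ref{th:local:maxima} is equivalent to the following: after normalizing $S_{reg}$ so that $\mathrm{Sys}(S_{reg})=1$ (so all its sides have length $1$, and $\mathcal{A}(S_{reg})=\tfrac{\sqrt3}{4}T+\tfrac{3\sqrt3}{2}H$, with $T,H$ the numbers of triangles and hexagons), every translation surface $S$ close to $S_{reg}$ in the stratum with $\mathrm{Sys}(S)=1$ satisfies $\mathcal{A}(S)\ge\mathcal{A}(S_{reg})$, with equality only if $S$ is a rotation of $S_{reg}$. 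This yields the strict maximum in $\mathbb P\mathcal{H}(k_1,\dots,k_r)$ and the non-strict one in $\mathcal{H}_1(k_1,\dots,k_r)$ at once.

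I would start from the routine facts: for $S$ close enough to $S_{reg}$, the sides of the triangles and hexagons persist as saddle connections, still cut $S$ into deformed triangles and hexagons glued in the same combinatorial pattern, and are the shortest saddle connections of $S$ (every other saddle connection of $S_{reg}$ has length bounded away from $1$). So $\mathrm{Sys}(S)=1$ forces every side of $S$ to have length $\ge 1$. Write $S=S_{reg}+\delta$ in the period coordinates attached to this decomposition, where $\delta=(\delta_e)_e$ assigns a holonomy increment to each side and satisfies the closing relation of every face; using rotation invariance of $\mathcal{A}$ and $\mathrm{Sys}$, I may assume (after rotating $S$) that $\delta$ is orthogonal to the line $\rho$ of infinitesimal rotations, i.e.\ those $\delta$ with $\delta_e=t\,R_{90^\circ}\hat e$ for all $e$, $\hat e$ the unit side vector.

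The core is a first-variation computation combined with a rigidity statement. Write $a_e=a_e(\delta)=\langle\hat e,\delta_e\rangle$ for the first-order change in the length of the side $e$. Since the area of a triangle is a function of its three side lengths, the differential of $\mathrm{Area}(\tau)$ at the equilateral unit triangle is $\tfrac{\sqrt3}{6}\sum_{e\in\partial\tau}a_e$; and the shoelace first-variation formula for a hexagon, $\tfrac12\sum_k\delta v_k\wedge(v_{k+1}-v_{k-1})$ (vertices $v_k$, $\wedge$ the planar cross product), collapses at the regular unit hexagon, where the short diagonal $v_{k+1}-v_{k-1}$ equals $\sqrt3\,R_{90^\circ}(v_k-O)$ with $O$ the center, to $\tfrac{\sqrt3}{2}\sum_k\langle\delta v_k,v_k-O\rangle=\tfrac{\sqrt3}{2}\sum_{e\in\partial\eta}a_e$ (equivalently: the regular hexagon is critical for area among equilateral hexagons). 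Summing over all faces, each side being shared by exactly two of them, gives $d\mathcal{A}(\delta)=\sum_e\kappa_e a_e(\delta)$ with every $\kappa_e\in\{\tfrac{\sqrt3}{3},\tfrac{2\sqrt3}{3}\}$ positive. As $\mathrm{Sys}(S)=1$ gives $a_e(\delta)\ge-O(|\delta|^2)$ for each $e$, this alone yields $\mathcal{A}(S)\ge\mathcal{A}(S_{reg})-O(|\delta|^2)$. The improvement comes from the claim that the linear map $\delta\mapsto(a_e(\delta))_e$ has kernel exactly $\R\rho$: then on $\rho^\perp$ one has $\max_e|a_e(\delta)|\ge c|\delta|$ for a constant $c>0$, and since each $a_e(\delta)\ge-O(|\delta|^2)$ this forces some $a_{e_0}(\delta)\ge c|\delta|$ once $\delta\ne0$ is small, whence $\sum_e a_e(\delta)\ge c|\delta|-O(|\delta|^2)$ and $\mathcal{A}(S)=\mathcal{A}(S_{reg})+d\mathcal{A}(\delta)+O(|\delta|^2)\ge\mathcal{A}(S_{reg})+c'|\delta|-O(|\delta|^2)>\mathcal{A}(S_{reg})$ for all small $\delta\ne0$; equality forces $\delta=0$, i.e.\ $S$ a rotation of $S_{reg}$.

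The remaining, and main, point is the kernel claim, which is exactly where the two hypotheses on $S_{reg}$ are used. If $a_e(\delta)=0$ for all $e$, then $\delta_e=t_e\,R_{90^\circ}\hat e$, and for each face $P$ the closing relation $\sum_{e\in\partial P}\pm\delta_e=0$ together with $\sum_{e\in\partial P}\pm\hat e=0$ (the face really closes up, all its sides having length $1$) gives $\sum_{e\in\partial P}\pm t_e\hat e=0$; for a triangle this forces its three $t_e$ to be equal. Connectedness of the set of triangles minus its vertices means the triangles form an edge-connected subcomplex, so a single value of $t_e$ propagates over all of them; and since the boundary of each hexagon lies in the boundary of the triangle set, every hexagon side is a triangle side, carrying that value onto the hexagon sides as well. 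Hence all $t_e$ coincide and $\delta\in\R\rho$ (the reverse inclusion being clear). The first-variation identity for the hexagon is the only other slightly delicate computation; persistence of the decomposition, $\mathrm{Sys}$ being realized by the sides, and the period-coordinate description of the tangent space are standard. The heuristic for why the hypotheses are needed: if a hexagon of $S_{reg}$ were not pinned down by the triangulated part, it could be flexed through equilateral hexagons, lowering its area at second order at no first-order cost, and $S_{reg}$ would fail to be a local maximum.
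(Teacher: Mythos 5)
Your proposal is correct, and it takes a genuinely different route from the paper's. The paper works at the level of explicit metric estimates: its key step is a quantitative claim that the period-coordinate distance from $S$ to $S_{reg}$ is controlled by the maximal excess length of the short saddle connections, proved by propagating a vertex-displacement bound (Lemma~\ref{L:perturbation:longueur:triangle}) along an edge-path of adjacent triangles (this is where connectedness enters, with constants of the form $10^N$); it then localizes a linear area gain $c\varepsilon$ in one triangle adjacent to the longest side (Lemma~\ref{L:perturbation:triangle}) and bounds each hexagon's loss by $c\varepsilon^2$ using that the regular hexagon is a critical point of the area among unit-sided hexagons (Lemma~\ref{L:perturbation:hexagon}). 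You instead linearize everything at $S_{reg}$: the differential of the total area is $\sum_e\kappa_e a_e$ with all $\kappa_e>0$ (your shoelace computation for the hexagon and the $\tfrac{\sqrt3}{6}$ for the triangle are correct, and the hypothesis that no two hexagons share a side is what guarantees every $\kappa_e>0$), the constraint $\mathrm{Sys}(S)=1$ gives $a_e\ge -O(|\delta|^2)$, and the whole theorem reduces to the infinitesimal rigidity statement that the kernel of $\delta\mapsto(a_e(\delta))_e$ is exactly the infinitesimal rotations --- which you prove by the same propagation (triangle rigidity forces equal $t_e$ on each triangle, edge-connectedness spreads it, and the boundary hypothesis carries it onto the hexagons), but at the linearized level. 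Your version is cleaner and makes transparent that the two hypotheses are precisely an infinitesimal rigidity condition, while the paper's is more elementary and self-contained. In a full write-up you would still need to spell out two points that you correctly flag as routine and that the paper also handles: that for $S$ near $S_{reg}$ the systole is realized by the persisting sides, and that the rotation normalization $\delta\perp\rho$ (the paper instead keeps $\gamma_1$ horizontal) can be achieved with $|\delta|$ comparable before and after; neither is a gap.
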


\begin{rem}
The second condition of  the above statement is equivalent to having the hexagons neither adjacent nor self-adjacent.
\end{rem}

The idea of the proof is the following: when deforming a little $[S_{reg}]$ following the normalization described above, the area of each triangle does not decrease, the area of each hexagon might decrease, but this will be compensated by an increase coming from at least one triangle. 


The next two lemmas are estimations of the variation of areas of hexagons and triangles that are deformed in our context.
\begin{lemma} \label{L:perturbation:hexagon}
Let $H_{reg}$ be the regular hexagon of sides of length $1$.
There exists a positive constant $c$ such that for every $\varepsilon>0$   small enough and every  convex hexagon $H=A_1 A_2 \ldots A_6$ with sides of lengths in the interval $[1, 1 + \varepsilon]$ and diagonals $A_1A_3$, $A_3A_5$ and $A_5A_1$ of lengths in the interval $[\sqrt{3}-\varepsilon,\sqrt{3}+\varepsilon]$, we have $\mathrm{Area}(H) \geq \mathrm{Area}(H_{reg}) - c\varepsilon^2$.
\end{lemma}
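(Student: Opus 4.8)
The plan is to split the hexagon $H$ into the central triangle $T_0 = A_1A_3A_5$ together with the three "ear" triangles $A_1A_2A_3$, $A_3A_4A_5$, $A_5A_6A_1$, and to estimate each piece separately. For the regular hexagon $H_{reg}$ the central triangle is equilateral with side $\sqrt3$ and each ear is equilateral with side $1$. First I would record that the area of a triangle is a smooth function of the squared side-lengths (Heron's formula), so that on the compact parameter region described — side-lengths in $[1,1+\varepsilon]$, the three "long" diagonals in $[\sqrt3-\varepsilon,\sqrt3+\varepsilon]$ — the area of $T_0$ and the areas of the three ears differ from their values at $H_{reg}$ by $O(\varepsilon)$ at worst; the point is to upgrade this to $O(\varepsilon^2)$ by observing that $H_{reg}$ is a critical point.

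Concretely, for each ear triangle the squared side-lengths are $(a^2,b^2,\ell^2)$ with $a,b\in[1,1+\varepsilon]$ and $\ell\in[\sqrt3-\varepsilon,\sqrt3+\varepsilon]$, and its area is $F(a^2,b^2,\ell^2)$ for the real-analytic Heron function $F$. Expanding $F$ to second order around $(1,1,3)$, the linear term in the two side variables $a^2-1, b^2-1 \in [0, O(\varepsilon)]$ has a definite sign: I claim $\partial F/\partial(a^2)>0$ and $\partial F/\partial(b^2)>0$ at $(1,1,3)$ (lengthening a side of an acute triangle increases area), so increasing $a,b$ from $1$ only helps, i.e. contributes a nonnegative term up to $O(\varepsilon^2)$. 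The diagonal variable $\ell^2-3\in[O(\varepsilon)\text{ both signs}]$ could a priori decrease the ear area linearly; here is where the central triangle compensates, because the same three diagonals are the sides of $T_0$, and shrinking them shrinks $T_0$'s area only at second order once we are near the equilateral configuration of side $\sqrt3$ — indeed the equilateral triangle maximizes area among triangles with a fixed perimeter, and more to the point, among triangles whose three sides all lie in $[\sqrt3-\varepsilon,\sqrt3+\varepsilon]$ the area is $\mathrm{Area}(\text{equilateral with side }\sqrt3) - O(\varepsilon^2)$, since the equilateral point is an interior critical point of the (smooth) area-versus-sides function and the Hessian is bounded on the compact region. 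So summing: $\mathrm{Area}(H) = \mathrm{Area}(T_0) + \sum_{\text{ears}} \mathrm{Area} \ge \big(\mathrm{Area}(T_0^{reg}) - O(\varepsilon^2)\big) + \sum_{\text{ears}}\big(\mathrm{Area}(\text{ear}^{reg}) + (\text{nonneg}) - O(\varepsilon^2)\big) = \mathrm{Area}(H_{reg}) - c\varepsilon^2$, with $c$ a bound built from the Hessians of the four Heron functions on the compact region, which is where the constant $c$ comes from and why it does not depend on $\varepsilon$ or on the particular $H$.

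The step I expect to be the genuine obstacle is making the "central triangle absorbs the linear loss of the ears" argument airtight without just appealing to hand-waving about critical points: one must check that near $H_{reg}$ the decomposition into $T_0$ and three ears is valid (the diagonals $A_1A_3$, etc., really do lie inside the convex hexagon and the four triangles are nondegenerate and non-overlapping — this holds for $\varepsilon$ small by continuity and convexity) and that the first-order variation of the total area in the three diagonal-directions vanishes at $H_{reg}$. That vanishing is essentially the statement that the derivative of $\mathrm{Area}(T_0)$ with respect to $\ell^2$ at the equilateral configuration is zero? — which is false; rather, one avoids this by not linearizing in the diagonals at all: restrict attention to the subregion where all three diagonals equal a common value $t\in[\sqrt3-\varepsilon,\sqrt3+\varepsilon]$ is too special, so instead I would bound $\mathrm{Area}(T_0)$ from below directly by $\tfrac{\sqrt3}{4}(\sqrt3-\varepsilon)^2 = \tfrac{\sqrt3}{4}(3 - 2\sqrt3\,\varepsilon + \varepsilon^2)$? — this is only $\mathrm{Area}(T_0^{reg}) - O(\varepsilon)$, not $O(\varepsilon^2)$, so this crude bound is not enough and one is forced back to the Heron-function Hessian estimate. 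Thus the clean route is: write $G(\text{three side${}^2$'s of }H) = \mathrm{Area}(H)$ as an explicit real-analytic function of the nine squared lengths (six sides, three diagonals) on the compact domain $K_\varepsilon$, note $H_{reg}$ is the unique minimizer-free critical-type point in the sense that $\nabla G$ has the sign pattern forcing the value at any point of $K_\varepsilon$ to exceed $G(H_{reg})$ minus (Hessian bound)$\cdot\varepsilon^2$, and extract $c = \tfrac12 \sup_{K_{\varepsilon_0}} \|\mathrm{Hess}\,G\|$ for a fixed small $\varepsilon_0$. I would present the computation via the $T_0$-plus-three-ears splitting precisely because it localizes the sign analysis of the gradient to four small triangles where the monotonicity "$\partial(\text{area})/\partial(\text{side}^2)>0$ for acute triangles" is transparent.
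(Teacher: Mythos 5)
Your decomposition into the central triangle $T_0=A_1A_3A_5$ plus three ears is a workable starting point, and your treatment of the six side variables is essentially fine: the deviations $a-1,b-1$ are nonnegative and the relevant gradient components are positive (though your parenthetical justification is off --- the $1,1,\sqrt3$ ear is obtuse; what matters is that the angle opposite the side being lengthened is acute, i.e.\ $\partial(16\,\mathrm{Area}^2)/\partial(a^2)=2(b^2+\ell^2-a^2)=6>0$ at $(1,1,3)$). The genuine gap is in the diagonal directions, where the deviations $d_i-\sqrt3$ can have either sign, so you need the first-order variation of the \emph{total} area in each $d_i$ to vanish at the regular configuration. The two justifications you offer for an $O(\varepsilon^2)$ loss there are both false: the equilateral triangle of side $\sqrt3$ is \emph{not} a critical point of $\mathrm{Area}(T_0)$ as a function of its three sides (taking all three equal to $\sqrt3-\varepsilon$ drops the area by $\tfrac32\varepsilon+O(\varepsilon^2)$, a first-order loss), and being the area maximizer at fixed perimeter is irrelevant since the perimeter is not fixed. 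You correctly suspect this yourself, but then never close the loop. What actually saves the argument is a cancellation you do not verify: at the regular configuration $\partial\,\mathrm{Area}(T_0)/\partial(d_i^2)=+\tfrac{1}{4\sqrt3}$ while $\partial\,\mathrm{Area}(\mathrm{ear}_i)/\partial(d_i^2)=-\tfrac{1}{4\sqrt3}$, so the diagonal components of the gradient of the \emph{sum} vanish. With that computation added, your final ``gradient sign pattern plus Hessian bound'' scheme does work; without it the argument only yields $\mathrm{Area}(H)\ge\mathrm{Area}(H_{reg})-c\varepsilon$, which is not the statement.

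The paper reaches the same cancellation by a cleaner device: it first replaces $H$ by the hexagon $H'$ with the same diagonals but all six sides of length exactly $1$, using the exact (not first-order) inequality of Lemma~\ref{L:ShorterSides} to get $\mathrm{Area}(H)\ge\mathrm{Area}(H')$. The area of $H'$ is then a function $F(d_1,d_2,d_3)$ of the three diagonals alone, one checks $\partial_iF(\sqrt3,\sqrt3,\sqrt3)=0$ directly, and Taylor--Young finishes. That normalization removes the six side variables from the expansion entirely and isolates the single computation that actually needs to be done; I would recommend adopting it, or else explicitly carrying out the ear-versus-central-triangle cancellation above.
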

\begin{proof}
We consider the convex hexagon $H'=A_1A_2'A_3A_4'A_5A_6'$ such that all of its sides are of length 1 (see Figure~\ref{Fig:hex} ). We see that $\mathrm{Area}(H) \geq \mathrm{Area}(H')$.

\begin{figure}
\begin{tikzpicture}[scale=2]
\coordinate (v0) at (1,0);
\coordinate (v1) at (0.5,0.866);
\coordinate (v2) at ($(v1)-(v0)$);
\coordinate (v3) at ($-1*(v0)$);
\coordinate (v4) at ($-1*(v1)$);
\coordinate (v5) at ($(v0)-(v1)$);
\coordinate (h1) at (0.1,-0.1);
\coordinate (h2) at (0.2,0.1);
\coordinate (h3) at (-0.2,-0.1);

\draw[dashed] (0,0)--++ (v0)--++ (v1)--++ (v2) --++ (v3) --++ (v4)-- cycle;
\draw (0,0)--++ ($(v1)+(v0)$) node[midway, above] {$d_1$}--++ ($(v3)+(v2)$) node[midway, below] {$d_2$}-- cycle node[midway, right] {$d_3$}; 
\draw (0,0)--++ ($(v0)+(h1)$)--++ ($(v1)-(h1)$)--++ ($(v2)+(h2)$) --++ ($(v3)-(h2)$) --++ ($(v4)+(h3)$)-- cycle;
\end{tikzpicture}
\caption{The hexagon $H$ and the new hexagon $H'$ of side 1  (dashed).}
\label{Fig:hex}
\end{figure}

We note the lengths of the diagonals $A_1A_3$, $A_3A_5$ and $A_5A_1$ by $d_1$, $d_2$ and $d_3$ respectively. The area of the hexagon $H'$ depends smoothly on $(d_1,d_2,d_3)$ and admits a local minimum at the point $(\sqrt{3},\sqrt{3},\sqrt{3})$ (that corresponds to the regular hexagon).

Therefore by the Taylor-Young formula we obtain
$$ Area(H) = Area(H_{reg}) + o(||(d_1-\sqrt{3},d_2-\sqrt{3},d_3-\sqrt{3})||^2). $$
Since for $i \in \{1,2,3\}$ we have $d_i \in [\sqrt{3}-\varepsilon,\sqrt{3}+\varepsilon]$ there exists a constant $c \in \mathbb{R}$ such that 
$$\mathrm{Area}(H) \geq \mathrm{Area}(H_{reg}) - c\varepsilon^2.$$

\end{proof}

\begin{lemma} \label{L:perturbation:triangle}
Let $T_{reg}$ be the equilateral triangle of sides of length 1.
There exists a positive constant $c\in \R$ such that  for every $\varepsilon>0$   small enough and every triangle $T$ with one of its sides of length $1+\varepsilon$ and the other sides of lengths in the interval $[1,1+\varepsilon]$, we have that $\mathrm{Area}(T) > \mathrm{Area}(T_{reg}) + c\varepsilon$.
\end{lemma}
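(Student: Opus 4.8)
The plan is to first identify, among all triangles satisfying the hypotheses, the one of smallest area, and then estimate that area directly by an explicit computation. Write the side lengths of $T$ as $1+\varepsilon$, $x$, $y$ with $x,y\in[1,1+\varepsilon]$; any such triple forms a genuine (nondegenerate) triangle, since $x+y\geq 2 > 1+\varepsilon$ for $\varepsilon<1$ and the other two triangle inequalities are immediate from $x,y\le 1+\varepsilon$ and $x,y\ge 1$. By Heron's formula,
$$16\,\mathrm{Area}(T)^2 = 2(1+\varepsilon)^2 x^2 + 2x^2 y^2 + 2y^2(1+\varepsilon)^2 - (1+\varepsilon)^4 - x^4 - y^4 ,$$
so, treating $1+\varepsilon$ as a constant,
$$\frac{\partial}{\partial x}\bigl(16\,\mathrm{Area}(T)^2\bigr) = 4x\bigl((1+\varepsilon)^2 + y^2 - x^2\bigr), \qquad \frac{\partial}{\partial y}\bigl(16\,\mathrm{Area}(T)^2\bigr) = 4y\bigl((1+\varepsilon)^2 + x^2 - y^2\bigr).$$
Since $x,y\le 1+\varepsilon$ we have $x^2\le (1+\varepsilon)^2 < (1+\varepsilon)^2 + y^2$, and symmetrically, so both partial derivatives are strictly positive on the square $[1,1+\varepsilon]^2$. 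Hence $\mathrm{Area}(T)$ is strictly increasing in each of the two free side lengths, and is minimized over the square at the corner $x=y=1$, that is, by the isosceles triangle $T_\varepsilon$ with sides $1+\varepsilon,1,1$. (Geometrically: the angle opposite the side being shortened stays acute because $\varepsilon$ is small, so shortening that side decreases the area.)

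It therefore suffices to produce a constant $c>0$ with $\mathrm{Area}(T_\varepsilon) > \mathrm{Area}(T_{reg}) + c\varepsilon$ for all sufficiently small $\varepsilon$. Taking the side of length $1+\varepsilon$ as base, $T_\varepsilon$ has height $\sqrt{1-(1+\varepsilon)^2/4}$, so
$$\mathrm{Area}(T_\varepsilon) = g(\varepsilon) := \frac14 (1+\varepsilon)\sqrt{4-(1+\varepsilon)^2} .$$
One checks $g(0) = \sqrt{3}/4 = \mathrm{Area}(T_{reg})$ and $g'(0) = \dfrac{1}{2\sqrt{3}} > 0$, whence the Taylor--Young expansion $g(\varepsilon) = \sqrt{3}/4 + \varepsilon/(2\sqrt{3}) + o(\varepsilon)$ gives $g(\varepsilon) > \mathrm{Area}(T_{reg}) + \varepsilon/(4\sqrt{3})$ once $\varepsilon$ is small enough. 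Taking $c = 1/(4\sqrt{3})$ completes the argument.

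There is no real obstacle here: the statement is elementary. The only point that deserves a little care is the reduction to the extremal configuration $T_\varepsilon$: one must verify that $\mathrm{Area}(T)$ is genuinely monotone in each of the two variable side lengths over the \emph{entire} parameter square, which is exactly the positivity of the two partial derivatives displayed above, and this positivity is where the smallness of $\varepsilon$ (keeping the relevant angle acute) is used. Everything else is a routine one-variable computation.
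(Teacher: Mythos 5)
Your proof is correct, and its overall strategy coincides with the paper's: reduce to the extremal isosceles triangle with sides $1+\varepsilon,1,1$ and Taylor-expand its area (your $g'(0)=\tfrac{1}{2\sqrt3}$ is the paper's $\tfrac{\sqrt3}{6}$). The one genuine difference is how the reduction is justified. The paper invokes a separate Euclidean lemma (Lemma~\ref{L:ShorterSides}): it moves the apex $C$ to the point $C'$ at distance $1$ from both endpoints of the long side and argues geometrically that the area does not increase. You instead fix the side $1+\varepsilon$ and observe from Heron's formula that
$\partial_x\bigl(16\,\mathrm{Area}^2\bigr)=4x\bigl((1+\varepsilon)^2+y^2-x^2\bigr)>0$
on $[1,1+\varepsilon]^2$, so the area is minimized at the corner $x=y=1$. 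This is self-contained and avoids having to check the angle hypotheses of the geometric lemma, at the cost of being slightly less visual; it also yields the same extremal triangle without any case analysis. One small inaccuracy in your closing commentary: the positivity of those partial derivatives does not actually use the smallness of $\varepsilon$ --- it follows for every $\varepsilon>0$ from $x,y\le 1+\varepsilon$ alone; smallness of $\varepsilon$ is only needed for the nondegeneracy ($\varepsilon<1$) and for the final first-order estimate $g(\varepsilon)>\tfrac{\sqrt3}{4}+\tfrac{\varepsilon}{4\sqrt3}$. This does not affect the validity of the argument.
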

\begin{proof}
Let $T=\triangle ABC$ and $d(A,B)=1+\varepsilon$ and let $C'$ be such that $d(A,C')=d(B,C')=1$. We have $\mathrm{Area}(\triangle ABC) \geq \mathrm{Area}(\triangle ABC')$. 

By Heron's formula, the area of $\triangle ABC'$ is:
$$
\mathrm{Area}(\triangle ABC')  = \frac{1}{4}\sqrt{(3+\varepsilon)(1-\varepsilon)(1+\varepsilon)^2}
=\frac{\sqrt 3}{4}+\frac{\sqrt 3}{6}\varepsilon + o(\varepsilon).$$
Therefore there exists a constant $c>0$ such that for all $\varepsilon$ small enough we have $\mathrm{Area}(T) > \mathrm{Area}(T_{reg}) + c\varepsilon$.
\end{proof}

\begin{lemma}\label{L:perturbation:longueur:triangle}
Let $ABC$ be a nondegenerate triangle of sides of length $l_1=BC$, $l_2=AC$, and $l_3=AB$. For $\varepsilon$ small enough, let $A'B'C'$ be a triangle with sides of lengths $l_1',l_2',l_3'$  such that for each $i\in \{1,2,3\}$, $|l_i-l_i'|\leq\varepsilon$. We assume further that  $d(A,A') \leq \varepsilon$,  $d(B,B') \leq \varepsilon$ and $C$ and $C'$ are in the same half-plane determined by $AB$. Then there is a constant $J>1$, only depending on $l_1,l_2,l_3$ such that $d(C,C') \leq  J\varepsilon$.
\end{lemma}

\begin{proof}
We consider first the translation $\tau$ of $\R^2$ of direction $\overrightarrow{A'A}$. We remark that $\tau(A')=A$ and $d(B',\tau(B'))<2\varepsilon$. Then we consider the rotation $\rho$ with center $A$ and of angle $\measuredangle BA\tau(B')$. We note $X''=\rho(\tau(X'))$ where $X \in \{A',B',C'\}$. See Figure~\ref{fig:lem:4.5}. We remark that $A$, $B$ and $B''$ are on the same line and that
$$d(\tau(C'),C'')=\frac{d(\tau(A'), \tau(C'))}{d(\tau(A'), \tau(B'))}d(\tau(B'),B'') .$$
Since $d(\tau(B'),B'') \leq d(\tau(B'),B)+d(B,B'') < 2\varepsilon + \varepsilon$, we obtain for $\varepsilon$ small enough a constant $J_1=J_1(l_2,l_3)$ such that
$$ d(\tau(C'),C'') < J_1\varepsilon. $$

\begin{figure}[htb]
\begin{tikzpicture} [scale=3.5]
\coordinate (v0) at (1,0);
\coordinate (vv0) at (1.05,0.05);
\coordinate (v1) at (0.5,0.866);
\coordinate (vv1) at (0.6,0.966);
\coordinate (e) at (-0.05,-0.05);
\coordinate (B) at (2,0);

\draw (0,0) node[above right] {$A$}-- (v0) node[below] {$B$} -- (v1) node[above] {$C$} -- cycle;
\draw[dashed] (e)  node[below left] {$A'$} --(vv0)  node[right] {$B'$} -- (vv1)  node[right] {$C'$}-- cycle;
\draw (B) node[below] {$A=A''$}-- ($(v0)+(B)$) node[below] {$B$} -- ($(v1)+(B)$) node[above] {$C$} -- cycle;
\draw[->] (1.2,0.4) --++ (0.6,0);

\begin{scope} [shift={(2.05cm,0.05cm)},rotate=-5.3]
\draw[dashed] (-0.05,-0.05) --(1.05,0.05) node[below right] {$B''$}  -- (0.6,0.966) node[right] {$C''$}-- cycle;
\end{scope}
\end{tikzpicture}
\caption{The triangles $ABC$, $A'B'C'$ and $A''B''C''$.}
\label{fig:lem:4.5}
\end{figure}

We want to bound $d(C,C'')$. 
For $(M, t)$ in a neighborhood of $(C,l_3)$, we consider the triangle $AMN_t$ where $N_t$ is in the ray $AB$ with $d(A,N_t)=t$ 
 and we define $\phi(M,t)=(d(M,A), d(M, N_t),d(A,N_t))$. The map $\phi$ is  smooth and its Jacobian derivative at $(C,l_3)$ is invertible. Hence, it defines a locally invertible map and $\phi^{-1}$ is smooth. This implies that there is a constant $J_2=J_2(l_1,l_2,l_3)$ such that for $\varepsilon$ small enough $$d(C,C'')<J_2 \varepsilon.$$

Combining with the above estimations, we obtain $d(C,C')<(J_1+J_2+1)\varepsilon$.
\end{proof}

\begin{proof}[Proof of Theorem~\ref{th:local:maxima}]
We show directly that $\mathrm{Sys}([S_{reg}])$ is a strict local maximum in the projective stratum, and replace $S_{reg}$ by a surface, still denoted $S_{reg}$ with shortest saddle connections of length one.

First, we remark that removing all shortest saddle connections of $S_{reg}$ gives a union of topological disks.  Hence we can find a basis of the relative homology that consists of shortest saddle connections $(\gamma_1,\dots ,\gamma_k)$ and we can assume that $\gamma_1$ is horizontal and oriented from left to right. We use this basis to fix local coordinates of the stratum $\mathcal{H}(k_1,\dots ,k_r)$, and define a distance in a neighborhood of $S_{reg}$. 
Recall that we identify a neighborhood of  element $[S_{reg}]\in \mathbb{P}\mathcal{H}(k_1,\ldots,k_r)$ with a subset $\mathcal{U}$ of $\mathcal{H}(k_1,\dots ,k_r)$ satisfying the following conditions: the shortest saddle connection is of length 1 and $\gamma_1$ stays horizontal. For $S\in \mathcal{U}$, we call \emph{short saddle connection} any saddle connection that corresponds to a shortest saddle connection of $S_{reg}$.

Let $\varepsilon>0$ be small enough and $S\in \mathcal{U}$ be such that  $\varepsilon=d(S,S_{reg})$. Let us define $\rho(S)=Max_{\gamma}(l(\gamma)-1)$, where the maximum is taken on all short saddle connections of $S$. By hypothesis, $\rho(S)\geq 0$. 

In a more general setting, we prove in Section~\ref{sec:rigid} that we have $\rho(S)=0$ if and only if $S=S_{reg}$. However in the current proof we need a stronger result (see the claim below).

We observe that since any short saddle connection $\gamma$ is a linear combination of $\{\gamma_1,\dots ,\gamma_k\}$ in the relative homology group, then its corresponding affine holonomy $v_\gamma$ satisfies $|v_{\gamma}-v_{\gamma,reg}|\leq K\varepsilon$. Since there are only a finite number of short saddle connections, $K$ can be made universal for  all short saddle connections. In particular, $\rho(S)\leq K \varepsilon$.


We have the following facts:
\begin{enumerate}
\item  
The sides of each hexagon $H$ in $S$ corresponding to a regular hexagon $H_{reg}$ in the decomposition of $S_{reg}$ are short saddle connections. By the above observation, we can apply Lemma~\ref{L:perturbation:hexagon} to $H$ for $\varepsilon'=2K\varepsilon$. Hence, there is a constant $c_1$, such that $$Area(H)\geq Area(H_{reg})-c_1\varepsilon^2.$$

\item By Lemma~\ref{L:perturbation:triangle}, there exists at least one equilateral triangle $T_{reg}$ in the decomposition of $S_{reg}$, such that for  the corresponding triangle $T$ in $S$ we have $Area(T)\geq Area(T_{reg})+c_2\rho(S)$ where $c_2$ is a positive constant. 
Furthermore the area of each triangle in $S_{reg}$ is not greater than the area of the corresponding triangle in $S$.
Summing up the corresponding contributions of the triangles, we obtain
$$Area(\cup T)\geq Area(\cup T_{reg})+c_2\rho(S).$$
\end{enumerate}

{\bf Claim:} There is a constant $D$ such that for $\varepsilon=d(S,S_{reg})$ small enough, $\varepsilon <D\rho(S)$. 
In other words: lengths of short saddle connections control the distance from $S$ to $S_{reg}$.

Summing up all contributions, assuming the claim, we see that the  area of $S$ is greater than the area of $S_{reg}$ for $\varepsilon>0$ small enough. Hence $S_{reg}$ is a local maximum of $\mathrm{Sys}$ which is nonglobal since the surface $S_{reg}$ is not built with equilateral triangles of sides saddle connections.
\medskip

Now we prove the claim. 
Recall that we assume that $\gamma_1$ does not change direction. Let $\delta=\rho(S)$.

Let $\gamma\in \{\gamma_2,\dots ,\gamma_k\}$ be a saddle connection in the fixed basis. By hypothesis, there is a sequence of pairwise distinct equilateral triangles $T_1,\dots ,T_l$ (whose sides are length one saddle connections) that form a ``path'' from $\gamma_1$ to $\gamma$, \emph{i.e.} such that
\begin{enumerate}
\item $\gamma_1$ is a side of $T_1$,
\item for each $i\in \{1,\dots ,l-1\}$, $T_i$ and $T_{i+1}$ are adjacent,
\item $\gamma$ is a side of $T_l$.
\end{enumerate}
Observe that $l$ is bounded from above by the total number $N$ of triangles in the decomposition of $S_{reg}$. Denote by $v_{reg}$ the affine holonomy of $\gamma$ in $S_{reg}$ and by $v$ the affine holonomy of $\gamma$ in $S$. We will use Lemma~\ref{L:perturbation:longueur:triangle} to bound $|v-v_{reg}|$.

Using the developing map (see Figure~\ref{seq:adj:triangles}), we can view the triangles $(T_i)_i$ as a sequence of adjacent equilateral triangles of the plane although in this case the triangles might intersect. We deform the surface $S_{reg}$ to obtain the surface $S$. The triangles $(T_i)_i$ persist but are not necessarily equilateral any more. Again, we can view them as a sequence of adjacent triangles $(T_i')_i$ in the plane. 

\begin{figure}[htb]
\begin{tikzpicture}[scale=3]
\coordinate (v0) at (1,0);
\coordinate (v1) at (0.5,0.866);
\coordinate (v2) at ($(v1)-(v0)$);
\coordinate (v3) at ($-1*(v0)$);
\coordinate (v4) at ($-1*(v1)$);
\coordinate (v5) at ($(v0)-(v1)$);

\draw (0,0) node[above] {$A_1=A_1'$} --++ (v0) node[above left] {$B_1$} --++ (v5) --++ (v1) --++ (v5) --++ (v4) --++ (v3) --++ (v2) --++ (v2);
\draw (v0) --++ (v4) --++ (v0) --++ (v4);
\draw ($2*(v5)+(v0)$)--++ (v2) --++ (v0);
\draw[dashed] (0,0)--++ ($1.1*(v0)$) node[above right] {$B_1'$} --++ ($1.05*(v5)-0.05*(v0)$)--++ ($(v1)+0.1*(v0)$);
\draw[dashed] (0,0) --++ ($(v5)+0.1*(v4)$)--($1.1*(v0)$);
\draw[dashed] ($(v5)+0.1*(v4)$) -- ($1.1*(v0)+1.05*(v5)-0.05*(v0)$) -- ($2*(v5)+0.1*(v4)$)-- ($(v5)+0.1*(v4)$);
\draw[dashed] ($2*(v5)+0.1*(v4)$) --++ ($(v0)+0.1*(v5)$)-- ($1.1*(v0)+1.05*(v5)-0.05*(v0)$)--++($(v0)+0.1*(v5)$)-- ($(v0)+0.1*(v5)+2*(v5)+0.1*(v4)$);
\draw[dashed,very thick] [->] ($1.1*(v0)+1.05*(v5)-0.05*(v0)+(v1)+0.1*(v0)$)--($1.1*(v0)+1.05*(v5)-0.05*(v0)+(v0)+0.1*(v5)$) node[midway,right] {$v$};
\draw[very thick] [->] ($(v0)+(v5)+(v1)$) --++ (v5) node[midway,below] {$v_{reg}\ \ $};

\draw (0.5,-0.3) node {$T_1$};
\draw (1,-0.6) node {$T_2$};
\end{tikzpicture}

\caption{A sequence of adjacent triangles and the perturbed ones}
\label{seq:adj:triangles}

\end{figure}

Denote by $T_1=A_1B_1C_1$ and $T_1'=A_1'B_1'C_1'$. We can assume that $A_1=A_1'$ is the vertex neither in $T_2$ nor in $T_2'$, and $B_2,B_2'$ are such that the segments $A_1B_1$ and $A_1'B_1'$ are horizontal (see Figure~\ref{seq:adj:triangles}). More generally for $i>1$, denote the triangle $T_i$ by $A_iB_iC_i$ in such a way that $A_iB_i$ is a side of previous triangle and  that $B_iC_i$ is a side of the next triangle, and we denote analogously the vertices of $T_i'$. Using Lemma~\ref{L:perturbation:longueur:triangle} we see that $d(C_1,C_1')<J\delta$ (recall that since $\rho(S)<Kd(S,S_{reg})=K\varepsilon$, we can assume $\delta$ to be arbitrarily small).  Since $d(B_1,B_1')<\delta<J\delta$ we can apply Lemma~\ref{L:perturbation:longueur:triangle} to the triangles $T_2$ and $T_2'$ for the constant $J\delta$ and we get $d(C_2,C_2')<J^2 \delta$. Since $l$ is bounded from above by $N$ and $\delta$ can be chosen arbitrarily small, we get $d(C_l,C_l')<J^l \delta$ and $d(B_l,B_l')<J^{l-1}\delta$. Finally, observe that $v$ is given by the difference of the coordinates of $B_l'$ and $C_l'$, and therefore:
$$|v-v_{reg}|< (J^l+J^{l-1})\delta < 2.J^N \delta. $$
This concludes the proof of the claim and of the theorem.

\end{proof}

\begin{example}\label{ex:loc:non:glob}
The surfaces given in Figure~\ref{fig:examples} are examples (with one hexagon) of local maxima that are nonglobal  in the strata $\mathcal{H}(2,0^k)$ and $\mathcal{H}(1,1,0^k)$, for $k\geq 1$.
\end{example}

\begin{figure}[htb]

\begin{tikzpicture}[scale=1.1]
\coordinate (v0) at (1,0);
\coordinate (v1) at (0.5,0.866);
\coordinate (A) at (-0.5,0);
\coordinate (B) at (-0.5,-3);
\coordinate (C) at (0,-6);
\coordinate (D) at (0,-9);
\coordinate (E) at (2,-10);

\draw  (A) --++ (v0) node[midway,below] {$n$} node {\tiny $\bullet$} --++ (v1)  node {\tiny $\bullet$} --++ ($(v1)-(v0)$) node {\tiny $\bullet$} --++ ($-1*(v0)$) node[midway,above] {$1$} node {\tiny $\bullet$}--++ ($-1*(v1)$) node[midway,above left] {$a$} node {\tiny $\bullet$}--++ ($(v0)-(v1)$)  node[midway,below left] {$b$} node {\tiny $\bullet$};
\draw ($(A)+(v0)$) --++ (v0)  node[midway,below] {$1$} node {\tiny $\bullet$};
\draw[dashed] ($(A)+2*(v0)$) --++ ($2*(v0)$) node {\tiny $\bullet$};
\draw ($(A)+4*(v0)$) --++ (v0) node[midway,below] {$n-1$} node {\tiny $\bullet$}--++ (v1) node[midway,below right] {$a$} node {\tiny $\bullet$} --++  ($(v1)-(v0)$) node[midway,above right] {$b$} node {\tiny $\bullet$} --++ ($-1*(v0)$) node[midway,above] {$n$} node {\tiny $\bullet$} --++ ($-1*(v0)$) node[midway,above] {$n-1$} node {\tiny $\bullet$};
\draw[dashed] ($(A)+2*(v1)$) --++ ($2*(v0)$);
\draw[dashed] ($(A)+3*(v0)+(v1)$) node {\tiny $\bullet$};
\draw ($(A)+4*(v0)+(v1)$) node {\tiny $\bullet$};
\draw ($(A)+8.2*(v0)+(v1)$)  node {\small in $\mathcal{H}(2,0^{2n-3})$};
\draw ($(A)+8.2*(v0)+(v1)+(0,-0.5)$)  node {\small for $n\geq 2$};

\draw  (B) --++ (v0) node[midway,below] {$n$} node {\tiny $\bullet$} --++ (v1)  node {\tiny $\bullet$} --++ ($(v1)-(v0)$) node {\tiny $\bullet$} --++ ($-1*(v0)$) node[midway,above] {$1$} node {\tiny $\bullet$}--++ ($-1*(v1)$) node[midway,above left] {$a$} node {\tiny $\bullet$}--++ ($(v0)-(v1)$) node[midway,below left] {$b$} node {\tiny $\bullet$};
\draw ($(B)+(v0)$) --++ (v0)  node[midway,below] {$2$} node {\tiny $\bullet$};
\draw ($(B)+4*(v0)$) --++ (v0) node[midway,below] {$n-1$} node {\tiny $\bullet$}--++ (v0) node[midway,below] {$1$} node {\tiny $\bullet$} --++ (v1) node[midway,below right] {$a$} node {\tiny $\bullet$} --++  ($(v1)-(v0)$) node[midway,above right] {$b$} node {\tiny $\bullet$} --++ ($-1*(v0)$) node[midway,above] {$n$} node {\tiny $\bullet$} --++ ($-1*(v0)$) node[midway,above] {$n-1$} node {\tiny $\bullet$};
\draw ($(B)+2*(v1)$) --++ (v0) node[midway,above] {$2$} node {\tiny $\bullet$};
\draw[dashed] ($(B)+2*(v0)$) --++ (v0) --++ (v0)  node {\tiny $\bullet$};
\draw[dashed] ($(B)+2*(v1)+(v0)$) --++ ($(v0)$) --++ (v0) ;
\draw ($(B)+4*(v0)+(v1)$) node {\tiny $\bullet$};
\draw ($(B)+5*(v0)+(v1)$) node {\tiny $\bullet$};
\draw ($(B)+8.2*(v0)+(v1)$)  node {\small in $\mathcal{H}(1,1,0^{2n-4})$};
\draw ($(B)+8.2*(v0)+(v1)+(0,-0.5)$)  node {\small for $n\geq 3$};

\draw  (C) --++ (v0) node[midway,below] {$n$} node {\tiny $\bullet$} --++ (v1)  node {\tiny $\bullet$} --++ ($(v1)-(v0)$) node {\tiny $\bullet$} --++ ($-1*(v0)$) node[midway,above] {$2$} node {\tiny $\bullet$}--++ ($-1*(v1)$) node {\tiny $\bullet$}--++ ($(v0)-(v1)$) node[midway,below left] {$b$} node {\tiny $\bullet$};
\draw ($(C)+(v0)$) --++ (v0)  node[midway,below] {$2$} node {\tiny $\bullet$};
\draw ($(C)+4*(v0)$) --++ (v0) node[midway,below] {$n-1$} node {\tiny $\bullet$}--++ (v0) node[midway,below] {$1$} node {\tiny $\bullet$} --++ 
($(v1)-(v0)$) node[midway,above right] {$b$} node {\tiny $\bullet$} --++  
($(v1)-(v0)$) node[midway,above right] {$a$} node {\tiny $\bullet$} --++ ($-1*(v0)$) node[midway,above] {$n$} node {\tiny $\bullet$} --++ ($-1*(v0)$) node[midway,above] {$n-1$} node {\tiny $\bullet$};
\draw ($(C)+2*(v1)-(v0)$) --++ ($-1*(v0)$) node[midway,above] {$1$} node {\tiny $\bullet$} --++ ($(v0)-(v1)$) node[midway,below left] {$a$} node {\tiny $\bullet$};
\draw[dashed] ($(C)+2*(v0)$) --++ (v0) --++ (v0)  node {\tiny $\bullet$};
\draw[dashed] ($(C)+2*(v1)$) --++ ($(v0)$) --++ (v0) ;
\draw ($(C)+3*(v0)+(v1)$) node {\tiny $\bullet$};
\draw ($(C)+4*(v0)+(v1)$) node {\tiny $\bullet$};
\draw ($(C)+7.7*(v0)+(v1)$)  node {\small in $\mathcal{H}(2,0^{2n-4})$};
\draw ($(C)+7.7*(v0)+(v1)+(0,-0.5)$)  node {\small for $n\geq 3$};

\draw  (D) --++ (v0) node[midway,below] {$n$} node {\tiny $\bullet$} --++ (v1)  node {\tiny $\bullet$} --++ ($(v1)-(v0)$) node {\tiny $\bullet$} --++ ($-1*(v0)$) node[midway,above] {$2$} node {\tiny $\bullet$}--++ ($-1*(v1)$) node {\tiny $\bullet$}--++ ($(v0)-(v1)$) node[midway,below left] {$b$} node {\tiny $\bullet$};
\draw ($(D)+(v0)$) --++ (v0)  node[midway,below] {$2$} node {\tiny $\bullet$};
\draw ($(D)+4*(v0)$) --++ (v0) node[midway,below] {$n-1$} node {\tiny $\bullet$}--++ (v0) node[midway,below] {$1$} node {\tiny $\bullet$} --++ 
($(v1)-(v0)$) node[midway,above right] {$a$} node {\tiny $\bullet$} --++  
($(v1)-(v0)$) node[midway,above right] {$b$} node {\tiny $\bullet$} --++ ($-1*(v0)$) node[midway,above] {$n$} node {\tiny $\bullet$} --++ ($-1*(v0)$) node[midway,above] {$n-1$} node {\tiny $\bullet$};
\draw ($(D)+2*(v1)-(v0)$) --++ ($-1*(v0)$) node[midway,above] {$1$} node {\tiny $\bullet$} --++ ($(v0)-(v1)$) node[midway,below left] {$a$} node {\tiny $\bullet$};
\draw[dashed] ($(D)+2*(v0)$) --++ (v0) --++ (v0)  node {\tiny $\bullet$};
\draw[dashed] ($(D)+2*(v1)$) --++ ($(v0)$) --++ (v0) ;
\draw ($(D)+3*(v0)+(v1)$) node {\tiny $\bullet$};
\draw ($(D)+4*(v0)+(v1)$) node {\tiny $\bullet$};
\draw ($(D)+7.7*(v0)+(v1)$)  node {\small in $\mathcal{H}(1,1,0^{2n-5})$};
\draw ($(D)+7.7*(v0)+(v1)+(0,-0.5)$)  node {\small for $n\geq 3$};
\end{tikzpicture}

\caption{Examples of local but nonglobal maxima.}
\label{fig:examples}

\end{figure}

The above examples will be used in the next theorem in order to build examples in most strata.

\begin{theorem}\label{th:loc:max:in:each:stratum}
Let $\mathcal{H}$ be a stratum of area one and genus $g\geq 2$ surfaces. We assume that $\mathcal{H}$ is neither $\mathcal{H}(1,1)$ nor $\mathcal{H}(2)$. Then $\mathcal{H}$ contains local maxima of the function $\mathrm{Sys}$ that are not global.
\end{theorem}

We first prove the following lemma.
\begin{lemma}\label{lem:constr:loc}
We consider the stratum $\mathcal{H}=\mathcal{H}(m_1,\dots ,m_r,x,y)$ with $m_1,\dots ,m_r,x,y\geq 0$. We assume that there exists a surface $S_1\in \mathcal{H}$ that satisfies the hypothesis of Theorem~\ref{th:local:maxima} and such that there is a shortest saddle connection $\gamma_1$ joining a singularity of degree $x$ to a distinct singularity of degree $y$. Then
\begin{itemize}
\item[a)]  For any $n_1,\dots ,n_k,p,q\geq 0$ with $p+q+\sum_i n_i$ even, there exists a local but nonglobal maximum of $Sys$ in the stratum $\mathcal{H}(m_1,\dots ,m_r,p+a+1,q+a+1,n_1,\dots ,n_k)$.
\item[b)] For any $n_1,\dots ,n_k,p\geq 0$ with $p+\sum_i n_i$ even, there exists a local but nonglobal maximum of $Sys$ in the stratum $\mathcal{H}(m_1,\dots ,m_r,p+x+y+2,n_1,\dots ,n_k)$.
\end{itemize}
\end{lemma}

\begin{proof}
By Lemma~\ref{lemma:decomp:triang:equil},  there is  a surface $S_2$ that decomposes into equilateral triangles with sides saddle connections in $\mathcal{H}(p,q,n_1,\dots ,n_k)$, and with a shortest saddle connection $\gamma_2$ joining a singularity of degree $p$ to a (distinct) singularity of degree $q$. 

 We can assume $\gamma_1,\gamma_2$ are vertical and of the same length. Now we glue the two surfaces by the following classical surgery: cut the two surfaces along $\gamma_1$ and $\gamma_2$, and glue the left side of $\gamma_1$ with the right side of $\gamma_2$ and the right side of $\gamma_1$ with the right side of $\gamma_2$. We get a surface in $\mathcal{H}(m_1,\dots ,m_r,p+a+1,q+a+1,n_1,\dots ,n_k)$ that satisfies the hypothesis of Theorem~\ref{th:local:maxima} and hence is a local but nonglobal maximum for $Sys$. This proves Case a).
 
 The proof of Case b) is the same by considering a surface $S_2$ in $\mathcal{H}(p,n_1,\dots ,n_k)$ with a shortest saddle connection joining a singularity of degree $p$ to itself.
\end{proof}

\begin{proof}[Proof of Theorem~\ref{th:loc:max:in:each:stratum}]
Recall that examples of local but nonglobal maxima of $Sys$ in the strata $\mathcal{H}(2,0^k)$ and $\mathcal{H}(1,1,0^k)$, for $k\geq 1$ have already been constructed in Example~\ref{ex:loc:non:glob}. 
It remains to constructs examples in all strata of genus at least 3.

We start from the example $S_{0,2}\in \mathcal{H}(2,0)$ given in Example~\ref{ex:loc:non:glob}. There is a saddle connection joining the two singularities.
\begin{itemize}
\item By Case b) of Lemma~\ref{lem:constr:loc}, there is a local maximum in any stratum of the form $\mathcal{H}(p+4,n_1,\dots ,n_k)$ with $p\geq 0$, $k\geq 0$, and $n_1,\dots ,n_k\geq 0$.
\item By Case a) of Lemma~\ref{lem:constr:loc}, there is a local maximum in any stratum of the form $\mathcal{H}(p+3,q+1,n_1,\dots ,n_k)$ with $p,q\geq 0$, $k\geq 0$, and $n_1,\dots ,n_k\geq 0$.
\end{itemize}
There remains to construct examples in strata with singularities of degree at most 2.
Now we consider $S_{2,0,0}\in \mathcal{H}(2,0,0)$ given in Example~\ref{ex:loc:non:glob}. There is a saddle connection joining the two marked points.
\begin{itemize}
\item By Case b) of Lemma~\ref{lem:constr:loc}, there is a local maximum in any stratum of the form $\mathcal{H}(2,2,n_1,\dots ,n_k)$, with $k\geq 0$, and $n_1,\dots ,n_k\geq 0$.
\end{itemize}
Now we consider $S_{1,1,0,0}\in \mathcal{H}(1,1,0,0)$ given in Example~\ref{ex:loc:non:glob}. There is a saddle connection joining the two marked points.
\begin{itemize}
\item By Case b) of Lemma~\ref{lem:constr:loc}, there is a local maximum in any stratum of the form $\mathcal{H}(1,1,2,n_1,\dots ,n_k)$, with $k\geq 0$, and $n_1,\dots ,n_k\geq 0$.
\item  By Case a) of Lemma~\ref{lem:constr:loc}, there is a local maximum in any stratum of the form $\mathcal{H}(1,1,1,1,n_1,\dots ,n_k)$, with $k\geq 0$, and $n_1,\dots ,n_k\geq 0$.
\end{itemize}

Finally, we have produced examples in all strata of genus $g\geq 2$ except $\mathcal{H}(2)$ and $\mathcal{H}(1,1)$.
\end{proof}

\begin{rem}
We remark that we cannot build with these constructions local maxima in $\mathcal{H}(2)$ and in $\mathcal{H}(1,1)$. Indeed, for $\mathcal{H}(2)$ we need one hexagon and two triangles and there is only one possibility that provides a surface in $\mathcal{H}(2)$. But in this case the hexagon is self-adjacent (see next section for a proof that it not a local maximum). For $\mathcal{H}(1,1)$, we need one hexagon and four triangles, and by checking all the possibilities we see that we cannot built the required example.

We prove in a following paper \cite{BG2} that in these strata (and more generally in any hyperelliptic connected components of strata), any local maximum is a global maximum.
\end{rem}

\section{Number of shortest saddle connections}
In this section, we explore the relations between the (locally) maximal values of the function $\mathrm{Sys}$ is the (locally) maximal number of short saddle connections.
\subsection{Maximal number}
In the case of global maxima, the relation is clear as shown in the next proposition.
\begin{proposition} \label{prop:max:number}
The greatest number of shortest saddle connections of a surface in $\mathcal{H}(k_1,\ldots,k_r)$ is equal to $\sum_{i=1}^r 3(k_i+1)$ and this number is realized if and only if  the surface is a global maximum for the function $\mathrm{Sys}$ in $\mathbb{P}\mathcal{H}(k_1,\dots ,k_r)$.
\end{proposition}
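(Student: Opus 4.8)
The plan is to count, for each singularity, how many shortest saddle connections can emanate from it, and then sum. Let $S$ be an area-one surface in $\mathcal{H}(k_1,\dots,k_r)$ with $\mathrm{Sys}(S)=s$. Around a singularity $P_i$ of degree $k_i$ the total cone angle is $2\pi(k_i+1)$. Each shortest saddle connection incident to $P_i$ contributes one ray in a definite direction at $P_i$, and I would first show that two distinct such rays at $P_i$ must make an angle of at least $\pi/3$ with each other: if two saddle connections of length $s$ leave $P_i$ with an angle strictly less than $\pi/3$ between them, then by the law of cosines the segment joining their other endpoints has length strictly less than $s$; one then checks this segment contains a saddle connection (via a standard shortening argument, as in the proof of Lemma~\ref{L:ShortSaddleDelaunay}), contradicting minimality of $s$. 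Since each saddle connection from $P_i$ to itself is counted twice (once for each end) while one to a distinct singularity is counted once, the packing bound on a circle of circumference $2\pi(k_i+1)$ gives at most $6(k_i+1)$ rays at $P_i$, hence, summing over $i$ and dividing the double-counting appropriately, at most $\tfrac{1}{2}\sum_i 6(k_i+1)=\sum_i 3(k_i+1)$ shortest saddle connections in total.

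Next I would show this bound is attained exactly at the global maxima. One direction: if $S$ is a global maximum for $\mathrm{Sys}$, then by Theorem~\ref{th:max:sys} it is built from $2(2g-2+r)$ equilateral triangles of side $s$; every edge of this triangulation is a shortest saddle connection (no shorter saddle connection can exist), every triangle contributes $3$ edges, each edge lies in $2$ triangles, so there are $3(2g-2+r)=\sum_i 3(k_i+1)$ shortest saddle connections (using $\sum_i(k_i+1) = 2g-2+r$). The converse is the crux: suppose $S$ realizes $\sum_i 3(k_i+1)$ shortest saddle connections. Then equality must hold in the packing estimate at every singularity, forcing the $6(k_i+1)$ rays at each $P_i$ to be equally spaced at angle exactly $\pi/3$. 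I would then argue that the shortest saddle connections alone already triangulate $S$ into equilateral triangles of side $s$: around each vertex the consecutive short rays bound sectors of angle $\pi/3$, and by the law of cosines together with the angle-$\ge\pi/3$ argument the third side of each such sector-triangle is again a shortest saddle connection, so these triangles are equilateral and close up into a triangulation. Counting faces of this triangulation gives area $\tfrac{\sqrt3}{4}s^2\cdot 2(2g-2+r)$, so $\mathrm{Sys}(S)=s$ meets the bound of Theorem~\ref{th:max:sys} and $S$ is a global maximum.

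The main obstacle I expect is the converse direction, specifically showing that equality in the per-singularity packing bound genuinely forces the short saddle connections to assemble into a triangulation of the whole surface, rather than leaving ``gaps'' — i.e., controlling the global combinatorics from the purely local angular data. The delicate point is verifying that when two short rays bound a sector of exactly $\pi/3$ at a vertex, the opposite side is itself a \emph{shortest} saddle connection and not merely a short geodesic segment crossing other saddle connections; this needs the same shortening/Delaunay-type argument used in Lemma~\ref{L:ShortSaddleDelaunay}, now run in the equality case, plus a check that these equilateral triangles have disjoint interiors and cover $S$. One clean way to organize this: start from a Delaunay triangulation (Lemma~\ref{L:ShortSaddleDelaunay} puts all shortest saddle connections in it), and show that equality in the count forces every Delaunay edge to have length $s$ and every Delaunay triangle to be equilateral, at which point Theorem~\ref{th:max:sys} applies directly. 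I would most likely present the argument in that order, as it reuses machinery already developed in Section~3 and avoids re-deriving the triangulation from scratch.
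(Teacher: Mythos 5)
Your proposal is correct and follows essentially the same route as the paper: the angle-at-least-$\pi/3$ packing bound at each cone point of angle $2\pi(k_i+1)$ gives the upper bound $\sum_{i}3(k_i+1)$, and the equality case is handled exactly as in your preferred "clean" organization, namely by noting that the Delaunay triangulation has precisely this many $1$-cells and contains all shortest saddle connections (Lemma~\ref{L:ShortSaddleDelaunay}), so saturation forces every Delaunay triangle to be equilateral and Theorem~\ref{th:max:sys} applies. No gaps.
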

\begin{proof}
Let $S$ be a surface in $\mathcal{H}(k_1,\ldots,k_r)$. We consider two shortest saddle connections $\gamma_1$ and $\gamma_2$ in $S$ starting at the same singularity. 

Let us assume that the conical angle between $\gamma_1$ and $\gamma_2$ is less than~$\frac{\pi}{3}$. Then 
\begin{itemize}
\item either the not common ends of $\gamma_1$ and $\gamma_2$ can be connected by a saddle connection and as consequence this saddle connection is shorter than $\gamma_1$ and $\gamma_2$,
\item or there is a saddle connection between $\gamma_1$ and $\gamma_2$ (starting at the same singularity) that is shorter than them.
\end{itemize}
In both cases we have a contradiction and hence the maximal number of shortest saddle connections starting at a singularity of order $k_i$ is $6(k_i+1)$. This gives us that the total number of shortest saddle connections cannot exceed $\sum_{i=1}^r 3(k_i+1)$. 

This number is the number of 1-cells in the Delaunay triangulation. Hence, by Lemma~\ref{L:ShortSaddleDelaunay}, the surface has this number of shortest saddle connections if and only if its Delaunay triangulation is given by equilateral triangles. By Theorem~\ref{th:max:sys} this situation corresponds precisely to global maxima of the function $\mathrm{Sys}$.
\end{proof}

\subsection{Locally maximal number: rigid surfaces} \label{sec:rigid}
For a given translation surface, one would like to find a path joining this surface to a global maximum for the function $\mathrm{Sys}$. Following the above proposition, a greedy algorithm could be to try to increase the number of shortest saddle connections until we reach a surface with the maximal number. Unfortunately, this algorithm does not always work.
 
We call a surface $S$ in $\mathcal{H}(k_1,\ldots,k_r)$ \textit{rigid} if there exists a punctured neighbourhood of $[S]\in \mathbb{P}\mathcal{H}(k_1,\dots ,k_r)$ where all surfaces have a strictly smaller number of shortest saddle connections. As explained above, the global maxima of the systole function are rigid surfaces.

An example of a rigid surface is every surface $S$ that, when cut along its shortest saddle connections, decomposes into equilateral triangles and polygons with no singularities in the interior satisfying the following conditions:
\begin{itemize}
\item  the set of the equilateral triangles without the vertices is connected, 
 \item  the boundary of each polygon is contained in the boundary of the set of triangles. 
 \end{itemize}
 Indeed, when deforming such a surface in a way that the initial shortest saddle connections stay of the same length,  the set  of triangles is isometrically preserved and therefore the set of polygons. In particular, the examples of Theorem~\ref{th:local:maxima} are rigid surfaces.

We give  another family of examples: consider a surface $S$ as above, but instead of having one, it has 2 connected components of triangles. We further assume that there is a polygon $\mathcal{P}$ such that the sum of the affine holonomy of the set of saddle connections of its boundary associated to each component of triangles is nonzero when orienting the saddle connections according to the natural orientation of the $\partial \mathcal{P}$. Indeed as above, when deforming such a surface in a way that the initial shortest saddle connections stay of the same length, then each connected component of triangles is isometrically preserved, and the condition on the  holonomy implies that the boundary~$\mathcal{P}$ is unchanged, which rigidifies the whole surface. If further the polygons are regular hexagons, we can adapt  the proof of Theorem~\ref{th:local:maxima} to show that these are also local but nonglobal maxima.

The examples given in Figure~\ref{fig:non:max:loc} show that it is not sufficient to be decomposed into equilateral triangles and regular hexagons in order to be a local maximum: in this figure, the shortest saddle connections remain of length one and hence the area of the triangles does not change, but the hexagon is deformed and therefore its area decreases. The first example has one connected component of triangles but the hexagon is self-adjacent. The second  one has two connected components of triangles. Note that the example in $\mathcal{H}(0,0,0)$ can be easily modified to give a surface with true singularities (see Remark~\ref{rem:true:singularities}).
\begin{figure}[htb]

\begin{tikzpicture}[scale=1.5]
\coordinate (v0) at (1,0);
\coordinate (v1) at (0.5,0.866);
\coordinate (v2) at ($(v1)-(v0)$);
\coordinate (v3) at ($-1*(v0)$);
\coordinate (v4) at ($-1*(v1)$);
\coordinate (v5) at ($(v0)-(v1)$);
\coordinate (vv1) at (0.318,0.948);
\coordinate (vv3) at (-0.980,-0.199);
\coordinate (vv5) at (0.662,-0.749);
\coordinate (A) at (0,0);
\coordinate (B) at (4,0);
\coordinate (C) at (0,-4);
\coordinate (D) at (4,-4);

\draw ($(A)+(-1.2,2)$) node {$\mathcal{H}(2)$};
\draw  (A) --++ (v0) node {\tiny $\bullet$} --++ (v1) node[midway,right] {3} node {\tiny $\bullet$} --++ (v2) node[midway,right] {4} node {\tiny $\bullet$} --++ (v3) node {\tiny $\bullet$} --++ (v4) node[midway,left] {3} node {\tiny $\bullet$} --++ (v5) node[midway,left] {4} node {\tiny $\bullet$};
\draw (A) --++ (v5) node[midway,left] {$2$}  node {\tiny $\bullet$} --++ (v1) node[midway,right] {$1$};
\draw ($(A)+ 2*(v1)$)  --++ (v2) node[midway,right] {$2$} node {\tiny $\bullet$} --++ (v4) node[midway,left] {$1$};

\draw  (B) --++ (v0) node {\tiny $\bullet$} --++ (vv1) node[midway,right] {3} node {\tiny $\bullet$} --++ (v2) node[midway,right] {4} node {\tiny $\bullet$} --++ (v3) node {\tiny $\bullet$} --++ ($-1*(vv1)$) node[midway,left] {3} node {\tiny $\bullet$} --++ (v5) node[midway,left] {4} node {\tiny $\bullet$};
\draw (B) --++ (v5) node[midway,left] {$2$} node {\tiny $\bullet$}  --++ (v1) node[midway,right] {1};
\draw ($(B)+ (v0)+(vv1)+ (v2)$)  --++ (v2) node[midway,right] {$2$} node {\tiny $\bullet$} --++ (v4) node[midway,left] {$1$};

\draw[->] (2,0.866) --++ (1,0);

\draw ($(C)+(-1.2,2)$) node {$\mathcal{H}(0,0,0)$};
\draw  (C) --++ (v0) node {\tiny $\bullet$} --++ (v1) node[midway,right] {1} node {\tiny $\bullet$} --++ (v2) node[midway,right] {4} node {\tiny $\bullet$} --++ (v3) node {\tiny $\bullet$}--++ (v4) node[midway,left] {3} node {\tiny $\bullet$}--++ (v5) node[midway,left] {2} node {\tiny $\bullet$};
\draw (C) --++ (v5) node[midway,left] {$4$} node {\tiny $\bullet$}  --++ (v1) node[midway,right] {$3$};
\draw ($(C)+ 2*(v1)$)  --++ (v2) node[midway,right] {$2$} node {\tiny $\bullet$} --++ (v4) node[midway,left] {$1$};

\draw  (D) --++ (v0) node {\tiny $\bullet$} --++ (vv1) node[midway,right] {1} node {\tiny $\bullet$} --++ (v2) node[midway,right] {4} node {\tiny $\bullet$} --++ (vv3) node {\tiny $\bullet$}--++ (v4) node[midway,left] {3} node {\tiny $\bullet$}--++ (vv5) node[midway,left] {2} node {\tiny $\bullet$};
\draw (D) --++ ($-1*(v2)$) node[midway,left] {$4$}  node {\tiny $\bullet$} --++ ($-1*(v4)$) node[midway,right] {$3$};
\draw ($(D)+ (v0)+(vv1)+ (v2)$)  --++ ($-1*(vv5)$) node[midway,right] {$2$} node {\tiny $\bullet$} --++ ($-1*(vv1)$) node[midway,left] {$1$};

\draw[->] (2,-3.1) --++ (1,0);
\end{tikzpicture}

\caption{Examples of nonrigid surfaces in $\mathcal{H}(0,0,0)$ and in $\mathcal{H}(2)$.}
\label{fig:non:max:loc}
\end{figure}

More generally, we have the following proposition:
\begin{proposition} \label{prop:loc:block}
Let $S$ be a translation surface such that, when cut along its saddle connections of shortest length, it decomposes into equilateral triangles and  regular hexagons. If the function $\mathrm{Sys}$  admits a local maximum at $[S]\in \mathbb{P}\mathcal{H}(k_1,\dots ,k_r)$, then $S$ is rigid.
\end{proposition}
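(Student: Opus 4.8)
The plan is to argue by contradiction: assuming $S$ is not rigid, I would produce arbitrarily close to $[S]$ a surface $[S']\neq[S]$ which is in fact equal to $[S]$. Throughout I would use the normalization of this section, in which every surface near $[S]$ is represented with shortest saddle connection of length $1$; in that normalization $\mathrm{Sys}([S'])=\mathrm{Area}(S')^{-1/2}$, so that "$[S]$ is a local maximum of $\mathrm{Sys}$" means precisely "$S$ is a local minimum of area". Write $N$ for the number of shortest saddle connections of $S$ and let $T_1,\dots,T_a,H_1,\dots,H_b$ be the equilateral triangles and regular hexagons obtained by cutting $S$ along them, all of whose edges have length $1$. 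First I would use that the set of saddle-connection lengths of $S$ is discrete, together with the continuity of saddle connections in the relative-period coordinates of the stratum, to fix $\varepsilon_0>0$ such that for $S'$ close enough to $S$ the only saddle connections of $S'$ of length $\le 1+\varepsilon_0$ are the $N$ continuous deformations of the shortest saddle connections of $S$, each of length $<1+\varepsilon_0$. If $S$ is not rigid, I may choose such an $S'$ with $[S']\neq[S]$ and with at least $N$ shortest saddle connections; these are then forced to be exactly those $N$ deformed saddle connections, all of length precisely $1$. Cutting $S'$ along them gives a decomposition with the same combinatorics as that of $S$: by Gauss--Bonnet each piece is a Euclidean polygon, the pieces $T_i'$ coming from $T_i$ have all three sides of length $1$, hence are equilateral and isometric to $T_i$, while the pieces $H_j'$ coming from $H_j$ are convex hexagons with all six sides of length $1$ that, by continuity, are close to regular.

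The heart of the proof is then an area estimate. Since the triangle pieces keep their area, $\mathrm{Area}(S')=\sum_i\mathrm{Area}(T_i)+\sum_j\mathrm{Area}(H_j')$. Among convex equilateral hexagons the regular one has strictly largest area (it is the unique cyclic one; equivalently the function $F$ of Lemma~\ref{L:perturbation:hexagon} has a strict local maximum at $(\sqrt3,\sqrt3,\sqrt3)$, where its gradient has already been shown to vanish). Hence $\mathrm{Area}(H_j')\le\mathrm{Area}(H_j)$ and therefore $\mathrm{Area}(S')\le\mathrm{Area}(S)$. But $S$ is a local minimum of area, so $\mathrm{Area}(S')\ge\mathrm{Area}(S)$ once $S'$ is close enough; thus equality holds everywhere, each $H_j'$ has maximal area, and by uniqueness of the maximizer each $H_j'$ is regular and isometric to $H_j$.

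It remains to reassemble. The surface $S'$ is now glued from the same edge-labelled polygons as $S$ — equilateral triangles and regular hexagons — along the same combinatorial identification pattern. Choosing for each piece the unique orientation-preserving label-preserving isometry onto the corresponding piece of $S'$ and checking that these are compatible with the identifications (an isometry of two segments matching endpoints is unique) produces an orientation-preserving isometry $\phi\colon S\to S'$. Such a $\phi$ is biholomorphic with $|\phi^*\omega_{S'}|=|\omega_S|$, so $\phi^*\omega_{S'}=e^{i\theta}\omega_S$ for a constant $\theta$; hence $S'$ is a rotation of $S$ and $[S']=[S]$ in $\mathbb{P}\mathcal{H}(k_1,\dots,k_r)$, contradicting $[S']\neq[S]$. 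This forces $S$ to be rigid.

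I expect the delicate points to be twofold. The first is the sharp \emph{and strict} isoperimetric statement for equilateral hexagons: this is exactly where the hypothesis that only triangles and \emph{regular} hexagons (rather than arbitrary polygons) appear is used, and the deformations of Figure~\ref{fig:non:max:loc} show that without the local-maximum hypothesis the conclusion genuinely fails. The second is the bookkeeping in the reassembly step: one must make sure the piecewise isometries glue to a genuine isometry of the closed surfaces even when a hexagon is self-adjacent, and that the resulting map differs from the identity of the stratum only by the $\mathrm{SO}(2)$-action. The continuity and discreteness facts about saddle connections used in the first step are standard for strata of Abelian differentials.
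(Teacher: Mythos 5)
Your argument is correct and is essentially the paper's own proof: both reduce to the facts that a nearby surface with at least as many shortest saddle connections must keep all $N$ deformed saddle connections at length $1$, that this freezes the triangles, that the regular hexagon uniquely maximizes area among unit-side hexagons, and that if nothing changes the point in $\mathbb{P}\mathcal{H}$ is unchanged. You merely run it as a contradiction (using the local-maximum hypothesis to force equality of areas and then reassembling an isometry) where the paper argues the contrapositive directly, and you spell out the reassembly and normalization details more explicitly.
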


\begin{proof}
We assume that $S$ is nonrigid, and deform the surface so that we keep all shortest saddle connections of the same length 1. This deformation does not change the metric on each triangle. Therefore, it must change the metric on at least one hexagon, otherwise the metric would be globally unchanged and the transformation would be just a rotation. In particular, the area of the deformed hexagons must strictly decrease, while the area of the triangles (and the unchanged hexagons) remains the same. Hence the area of the surface decreases and thus  $\mathrm{Sys}([S])$ increases.
\end{proof}

An interesting question is if the converse of the above proposition is true. We can also ask if, in general, any local maximum for $\mathrm{Sys}$ comes from a rigid surface. Note that in general, rigid surfaces do not necessarily give local maxima, as shown in the following example.

\begin{proposition}\label{prop:block:nonloc}
The translation surface given by Figure~\ref{example:rigid:not:maximal} is rigid but it is not a local maximum for the function $\mathrm{Sys}$ in $\mathbb{P}\mathcal{H}$ for $n\geq 3$.
\end{proposition}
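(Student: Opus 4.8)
The plan is to treat the two halves of the statement separately, since rigidity is about the \emph{number} of shortest saddle connections while ``not a local maximum'' is about the \emph{value} of $\mathrm{Sys}$, and the mechanisms behind them are different.

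For rigidity I would argue exactly as in the discussion preceding the proposition. The surface $S_n$ of Figure~\ref{example:rigid:not:maximal} decomposes, when cut along its shortest saddle connections, into equilateral triangles of side~$1$ together with polygons whose boundaries lie in the boundary of the set of triangles, with two or three connected components of triangles and with (at least) one polygon $\mathcal{P}$ for which the sum of the affine holonomies of the boundary saddle connections meeting each fixed triangle component is nonzero (oriented along $\partial\mathcal{P}$). Any deformation of $S_n$ that keeps all shortest saddle connections of length~$1$ preserves each triangle component isometrically, and the nonvanishing of that holonomy sum forces $\mathcal{P}$, hence the whole flat structure, to be unchanged up to rotation. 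Consequently, in a sufficiently small punctured neighbourhood of $[S_n]$ in $\mathbb{P}\mathcal{H}$ no surface keeps all of the $\mathrm{Sys}(S_n)$-realizing saddle connections at minimal length, so the number of shortest saddle connections strictly drops; thus $S_n$ is rigid.

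To see that $[S_n]$ is \emph{not} a local maximum I would produce an explicit one-parameter deformation $S_n(t)$, $0\le t<\delta$, with $S_n(0)=S_n$, with every saddle connection of length $\ge 1$ (so that, after rescaling, $\mathrm{Sys}(S_n(t))=1$), and with $\mathrm{Area}(S_n(t))<\mathrm{Area}(S_n)$ for $t>0$; rescaling to area one then gives surfaces arbitrarily close to $[S_n]$ with strictly larger systole. Concretely I would lengthen to $1+t$ a suitable collection of shortest saddle connections on the boundary of the \emph{non-regular} polygon $Q_n$ occurring in the decomposition, keeping all other shortest saddle connections of length exactly~$1$ (this is possible precisely because the rest of the surface is made of equilateral triangles, which remain equilateral). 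The area bookkeeping is then: every equilateral triangle that acquires a side of length $1+t$ has area at most $\tfrac{\sqrt3}{4}+Ct$ by the computation in Lemma~\ref{L:perturbation:triangle}, so the triangles contribute at most $+C't$ in total; on the other hand, because $Q_n$ is not the cyclic/regular polygon, its area has a nonzero gradient in the chosen deformation direction, so $\mathrm{Area}(Q_n)$ decreases by $c_n t+o(t)$ with $c_n>0$. The role of the hypothesis $n\ge 3$ is exactly to make $c_n$ large enough that $c_n>C'$, whence $\mathrm{Area}(S_n(t))\le \mathrm{Area}(S_n)+(C'-c_n)t<\mathrm{Area}(S_n)$ for small $t>0$.

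The main obstacle is this last geometric estimate: one must exhibit a feasible deformation of $S_n$ (all lengths only non-decreasing, so that $\mathrm{Sys}$ is controlled as in Theorem~\ref{th:local:maxima}) that genuinely forces $Q_n$ to lose area at a linear rate, and then verify that this rate overtakes the unavoidable $O(t)$ gain coming from the stretched triangles exactly when $n\ge 3$ — this is essentially a Euclidean-geometry computation read off from the figure. By contrast, the rigidity part and the per-triangle estimate are routine given the tools already developed, so the write-up should spend almost all of its effort on pinning down $Q_n$, the deformation, and the constants $C'$ and $c_n$.
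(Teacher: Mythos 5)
Your high-level strategy for the second half is the right one (find a deformation keeping every short saddle connection of length at least $1$ whose net effect on the area is negative, with the polygon's loss beating the triangles' gain precisely when $n\ge 3$), but the proof as written has a genuine gap exactly where you flag it: the deformation is never exhibited, and the one you sketch is not shown to work. Lengthening a collection of boundary sides of $Q_n$ to $1+t$ does not by itself force $Q_n$ to lose area --- the area of the polygon depends on the direction in which its vertices move, and the appeal to ``a nonzero gradient because $Q_n$ is not regular'' is insufficient: the feasible deformations are constrained to a cone (all lengths $\ge 1$), and within that cone the gradient could just as well point toward an area increase. The paper's deformation is different and more delicate: it is a vertical shear of two specific parallelograms (adding $-i\varepsilon$ and $+i\varepsilon$ to the holonomies of the saddle connections labelled ``$1$'' and ``$n-1$''), so the affected saddle connections change length only to order $\varepsilon^2$ while long horizontal arcs of $\partial Q_n$, of lengths $n-1$ and $n-2$, are translated by $\varepsilon$. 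The bookkeeping is then concrete: each of the two (disjoint, for $n\ge3$) sheared parallelograms gains area less than $\varepsilon$, while the polygon loses $(n-1)\varepsilon+(n-2)\varepsilon=(2n-3)\varepsilon$, and $2n-3>2$ exactly when $n\ge 3$. Your version would additionally need an argument that the constants $C'$ and $c_n$ you posit actually exist for a concrete feasible direction; without that the ``not a local maximum'' half is unproved.

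On the rigidity half, your mechanism does not match the surface: the paper asserts that Figure~\ref{example:rigid:not:maximal} has a \emph{connected} set of triangles and a single non-self-adjacent polygon with no interior singularities, so rigidity follows from the first (simpler) criterion in Section~5.2 --- any deformation fixing all short lengths preserves the triangle set isometrically and hence the polygon --- rather than from the two-or-three-component holonomy condition you invoke. The conclusion is the same, but as written your rigidity argument rests on a false description of the decomposition.
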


\begin{figure}[htb]

\begin{tikzpicture}[scale=1.4]
\coordinate (v0) at (1,0);
\coordinate (v1) at (0.5,0.866);
\coordinate (v2) at ($(v1)-(v0)$);
\coordinate (v3) at ($-1*(v0)$);
\coordinate (v4) at ($-1*(v1)$);
\coordinate (v5) at ($(v0)-(v1)$);
\coordinate (vv0) at (1,-0.3);
\coordinate (vvv0) at (1,0.3);
\coordinate (vv1) at ($(v1)+(0,-0.3)$); 
\coordinate (vvv1) at ($(v1)+(0,0.3)$); 
\coordinate (vv2) at ($(v2)+ (0,0.3)$);
\coordinate (vv3) at ($-1*(vv0)$);
\coordinate (vvv3) at ($-1*(vvv0)$);

\coordinate (A) at (0,0);
\coordinate (B) at (0,-5);


\draw  (A) node {$\bullet$} --++ (v0) node {$\bullet$} node[midway, above] {$a$} --++ (v0) node {$\bullet$} --++ (v0) node {$\bullet$}--++ (v2) node {$\bullet$} --++ (v4) --++ (v2) node {$\bullet$} --++ (v4) node[midway,left] {$e$} --++ (v4) node {$\bullet$} --++ (v2) node[midway,left] {$b$}; 
\draw ($(A) + (v5)$) --++ (v4) node {$\bullet$} node[midway,left] {$c$}  --++ (v0)  node {$\bullet$} node[midway,below] {$a$} --++ (v0) node {$\bullet$}  node[midway,below] {$n$} --++ (v0) node {$\bullet$} node[midway,below] {$n-1$};
\draw ($(A) + (v5)$) --++ (v5);
\draw ($(A)+ (v1)+ (v0)$) --++ (v0) node[midway, above] {$1$} --++ (v0) node {$\bullet$} node[midway, above] {$2$} --++ (v4);
\draw[dashed] ($(A)+ 3*(v0)$) --++ ($3*(v0)$);
\draw[dashed] ($(A)+ 3*(v0)+ (v5)+(v4)$) --++ ($3*(v0)$);
\draw[dashed] ($(A)+4*(v0)$)--++ (v2) --++ (v0) --++ (v0) --++ (v4) --++ (v2)--++ (v4);
\draw ($(A)+6*(v0)$) node {$\bullet$} --++ (v0)  node {$\bullet$} --++ (v0)  node {$\bullet$} --++ (v0)  node {$\bullet$} node[midway,above] {$d$} --++ (v4)  node {$\bullet$}  node[midway,right] {$e$}--++ (v5)  node {$\bullet$}  node[midway,right] {$b$}--++ (v3)  node {$\bullet$} node[midway,below] {$d$} --++ (v3)  node {$\bullet$} node[midway,below] {$1$}--++ (v3)  node {$\bullet$} node[midway,below] {$2$};
\draw ($(A)+6*(v0)$)--++ (v1)  node {$\bullet$} --++ (v5) --++ (v1)  node {$\bullet$}  --++ (v5) --++ (v5) --++ (v4);
\draw ($(A)+6*(v0)$)--++ (v2)  node {$\bullet$} --++ (v0) node[midway, above] {$n-2$} --++ (v0) node[midway, above] {$n-1$} --++ (v0) node {$\bullet$}  node[midway, above] {$n$} --++ (v4) node[midway,right] {$c$};
\draw[very thick] ($(A)+(v0)$) --++ (v0) --++ (v1)--++(v3) --++ (v4);
\draw[very thick] ($(A)+7*(v0)$) --++ (v0) --++ (v2)--++(v3) --++ (v5);

\draw  (B) node {$\bullet$} --++ (v0) node {$\bullet$} node[midway, above] {$a$} --++ (vv0) node {$\bullet$} --++ (v0) node {$\bullet$}--++ (v2) node {$\bullet$} --++ (v4) --++ (vv2) node {$\bullet$} --++ (v4) node[midway,left] {$e$} --++ (v4) node {$\bullet$} --++ (v2) node[midway,left] {$b$}; 
\draw ($(B) + (v5)$) --++ (v4) node {$\bullet$} node[midway,left] {$c$}  --++ (v0)  node {$\bullet$} node[midway,below] {$a$} --++ (v0) node {$\bullet$}  node[midway,below] {$n$} --++ (vvv0) node {$\bullet$} node[midway,below] {$n-1$};
\draw ($(B) + (v5)$) --++ (v5);
\draw ($(B)+ (v1)+ (v0)$) --++ (vv0) node[midway, above] {$1$} --++ (v0) node {$\bullet$} node[midway, above] {$2$} --++ (v4);
\draw[dashed] ($(B)+ 2*(v0)+(vv0)$) --++ ($3*(v0)$);
\draw[dashed] ($(B)+ 2*(v0)+(vvv0)+ (v5)+(v4)$) --++ ($3*(v0)$);
\draw[dashed] ($(B)+3*(v0)+(vv0)$)--++ (v2) --++ (v0) --++ (v0) --++ (v4) --++ (v2)--++ (v4);
\draw ($(B)+5*(v0)+(vv0)$) node {$\bullet$} --++ (v0)  node {$\bullet$} --++ (vvv0)  node {$\bullet$} --++ (v0)  node {$\bullet$} node[midway,above] {$d$} --++ (v4)  node {$\bullet$}  node[midway,right] {$e$}--++ (v5)  node {$\bullet$}  node[midway,right] {$b$}--++ (v3)  node {$\bullet$} node[midway,below] {$d$} --++ (vv3)  node {$\bullet$} node[midway,below] {$1$}--++ (v3)  node {$\bullet$} node[midway,below] {$2$};
\draw ($(B)+5*(v0)+(vv0)$)--++ (v1)  node {$\bullet$} --++ (v5) --++ (vvv1)  node {$\bullet$}  --++ (v5) --++ (v5) --++ (v4);
\draw ($(B)+5*(v0)+(vv0)$)--++ (v2)  node {$\bullet$} --++ (v0) node[midway, above] {$n-2$} --++ (vvv0) node[midway, above] {$n-1$} --++ (v0) node {$\bullet$}  node[midway, above] {$n$} --++ (v4) node[midway,right] {$c$};
\draw[very thick] ($(B)+(v0)$) --++ (vv0) --++ (v1)--++(vv3) --++ (v4);
\draw[very thick] ($(B)+6*(v0)+(vv0)$) --++ (vvv0) --++ (v2)--++(vvv3) --++ (v5);

\draw[very thick,->]   (4.5,-2.5) --++ (0,-1);

\end{tikzpicture}

\caption{Example of a rigid surface that is not a local maximum}
\label{example:rigid:not:maximal}
\end{figure}

\begin{rem}\label{rem:true:singularities}
Note that the translation surface given in Figure~\ref{example:rigid:not:maximal} contains marked points in the set of singularities. We can easily make them true singularities by surgeries analogous to the ones described in the proof of Lemma~\ref{lem:constr:loc}.
\end{rem}

\begin{proof}
The fact that the surface is rigid is clear: when cut along shortest saddle connections it decomposes into equilateral triangles and a non self-adjacent polygon with no singularities in the interior in such a way that the set of triangles is connected. 

Now, we deform the surface as shown in the figure: the only short saddle connections that change are the horizontal ones in the parallelograms drawn with fat sides (see the labels ``1'' and ``$n-1$'') and their diagonals. The affine holonomy of the saddle connection corresponding to the label ``1'' is changed by adding $-i\varepsilon$ and similarly, we add $i\varepsilon$ to the one corresponding to the label ``$n-1$''. 

Since all short saddle connections keep to be of length at least one, we need to check that the area of the surface decreases. 
\begin{enumerate}
\item The area of each fat parallelogram increases exactly by the area of the gray parallelogram in Figure~\ref{parallelograms}, which is less that $\varepsilon$, and the two fat parallelograms in Figure~\ref{example:rigid:not:maximal} are disjoints for $n\geq 3$.
\begin{figure}[htb]

\begin{tikzpicture}[scale=2]
\coordinate (v0) at (1,0);
\coordinate (v1) at (0.5,0.866);
\coordinate (v3) at ($-1*(v0)$);
\coordinate (v4) at ($-1*(v1)$);
\coordinate (vv0) at (1,-0.3);
\coordinate (vv1) at ($(v1)+(0,-0.3)$); 
\coordinate (vv3) at ($-1*(vv0)$);

\fill[black,fill=gray!25] ($(vv0)+(v1)$) --++ (v4) --++ (0,0.3) --++ (v1) --cycle;

\draw (0,0) --++ (v0) --++ (v1)--++(v3) node[midway, above] {length=1} --++ (v4);
\draw[dashed] (0,0) --++ (vv0) --++ (v1) --++ (vv3) --++ (v4);
\draw[<->] ($(vv0)+(v1)+(0.3,0)$) --++ (0,0.3) node[midway,right] {$\varepsilon$};

\end{tikzpicture}

\caption{Comparing the area of the two parallelograms}
\label{parallelograms}

\end{figure}

\item The area of the polygon decreases by $(n-1)\varepsilon+ (n-2)\varepsilon=(2n-3)\varepsilon$.
\end{enumerate}
Hence the total area decreases if $n\geq 3$.
\end{proof}

\end{document}